\theoremstyle{plain}
\newtheorem{theorem}{Theorem}[section]
\newtheorem{lemma}[theorem]{Lemma}
\theoremstyle{definition}
\newtheorem{definition}[theorem]{Definition}
\newcommand{\rmp}[1]{{#1}_{\mathrm{p}}}
\newcommand{\rmn}[1]{{#1}_{\mathrm{n}}}
\newcommand{\xo}{X_{\mathrm{n}}}
\newcommand{\xn}{X_{\mathrm{n}}}
\newcommand{\xp}{X_{\mathrm{p}}}
\newcommand{\vo}{V_{\mathrm{n}}}
\newcommand{\vn}{V_{\mathrm{n}}}
\newcommand{\vp}{V_{\mathrm{p}}}
\newcommand{\uo}{U_{\mathrm{n}}}
\newcommand{\un}{U_{\mathrm{n}}}
\newcommand{\up}{U_{\mathrm{p}}}
\newcommand{\ocs}{odd-cycle symmetric}
\newcommand{\sm}{\setminus}
\newcommand{\ktt}{K_{t,t}}
\newcommand{\sd}{\triangle}
\newcommand{\order}[1]{\mathrm{O}( #1 )}
\newcommand{\p}[1]{#1\sp{+}}
\newcommand{\m}[1]{#1\sp{-}}
\newcommand{\h}[1]{\hat{#1}}
\newcommand{\U}{\mathcal{U}}
\newcommand{\ufeas}{$\U$-feasible}
\newcommand{\ZZ}{\mathbf{Z}}
\newcommand{\RR}{\mathbf{R}}
\renewcommand{\c}[1]{$C_{\le {#1}}$}
\newcommand{\matroid}{\mathbf{M}}
\newcommand{\Matroid}{\mathbf{M}}
\newcommand{\Cir}{\mathcal{C}}
\newcommand{\cir}{\mathcal{C}}
\renewcommand{\qedsymbol}{\rule{1ex}{2ex}}
\begin{document}

\title{Excluded $t$-factors in Bipartite Graphs: \\
Unified Framework for Nonbipartite Matchings, \\
Restricted $2$-matchings, and 
Matroids\footnote{A preliminary version of this paper \cite{Tak17ipco} appears in Proceedings of the 19th IPCO.}}

\author{Kenjiro Takazawa\thanks{Department of Industrial and Systems Engineering, Faculty of Science and Engineering, Hosei University, Tokyo 184-8584, Japan.  
{\tt takazawa@hosei.ac.jp}}
}

\date{August, 2017}

\maketitle

\begin{abstract}
We propose a framework for optimal $t$-matchings excluding the prescribed $t$-factors in bipartite graphs. 
The proposed framework is a generalization of the nonbipartite matching problem 
and includes several problems,  
such as the 
triangle-free $2$-matching, 
square-free $2$-matching, 
even factor, 
and arborescence problems. 
In this paper, 
we demonstrate a unified understanding of these problems by 
commonly extending previous important results.  
We 
solve 
our problem under a reasonable assumption, 
which is sufficiently broad to include the specific problems listed above. 
We first present a min-max theorem and a combinatorial algorithm for the unweighted version. 
We then provide a linear programming formulation with dual integrality 
and a primal-dual algorithm for the weighted version. 
A key ingredient of the proposed algorithm is a technique to shrink forbidden structures, 
which corresponds to 
the techniques of 
shrinking odd cycles, triangles, squares, and directed cycles 
in Edmonds' blossom algorithm, 
a triangle-free $2$-matching algorithm, 
a square-free $2$-matching algorithm, 
and 
an arborescence algorithm, 
respectively. 
\end{abstract}

\section{Introduction}

Since matching theory \cite{LP09} was established, 
a number of generalizations of the matching problem have been proposed, 
including 
path-matchings \cite{CG97}, 
even factors \cite{CG01,Pap07,PS04}, 
triangle-free $2$-matchings \cite{CP80,Pap04}, 
square-free $2$-matchings \cite{Hart06,Pap07}, 
$\ktt$-free $t$-matchings \cite{Fra03}, 
$K_{t+1}$-free $t$-matchings \cite{BV10}, 
2-matchings covering prescribed edge cuts \cite{BIT13,KS08}, 
and 
\ufeas{} $2$-matchings \cite{Tak17}. 
For most of these generalizations,  
important results in matching theory can be extended, 
such as a min-max theorem, polynomial algorithms, and a linear programming formulation with dual integrality. 
However, 
while some similar structures are found, 
in most cases, 
they have been studied separately and 
few connections among those similar structures have been identified. 

In this paper, 
we propose a new framework of \emph{optimal $t$-matchings excluding prescribed $t$-factors}, 
to demonstrate a unified understanding of these generalizations. 
The proposed framework includes 
all of the above generalizations  
and 
the arborescence problem. 
Furthermore, 
it includes 
the traveling salesman problem (TSP)\@. 
This broad coverage implies some intractability of the framework; 
however we propose a tractable class that 
includes most of the efficiently solvable classes of the above problems. 

Our main contributions are a min-max theorem and 
a combinatorial polynomial algorithm that commonly extend those for 
the matching and triangle-free $2$-matching problem in nonbipartite graphs, 
the square-free $2$-matching problem in bipartite graphs, 
and 
the arborescence problem in directed graphs. 
A key ingredient of the proposed algorithm is a technique to shrink the excluded $t$-factors. 
This technique commonly extends the techniques used to 
shrink odd cycles,  
triangles, 
squares, 
and directed cycles 
in a matching algorithm \cite{Edm65}, 
a triangle-free $2$-matching algorithm \cite{CP80}, 
a square-free $2$-matching algorithms in bipartite graphs \cite{Hart06,Pap07}, 
and 
an arborescence algorithm \cite{CL65,Edm67}. 
respectively. 
We demonstrate that the proposed framework is 
tractable in the class where this shrinking technique works.

\subsection{Previous Work}

The problems most relevant to our work are the \emph{even factor}, 
\emph{triangle-free $2$-matching}, 
and 
\emph{square-free $2$-matching problems}. 

\subsubsection{Even factor}

The even factor problem \cite{CG01} is a generalization of the nonbipartite matching problem, 
which admits a further generalization: 
the basic/independent even factor problem \cite{CG01,IT08} is 
a common generalization with matroid intersection. 
The origin of the even factor problem is the \emph{independent path-matching problem} \cite{CG97}, 
which is a common generalization of the nonbipartite matching and matroid intersection problems. 
In \cite{CG97}, 
a min-max theorem, 
totally dual integral polyhedral description, 
and 
polynomial solvability by the ellipsoid method were presented. 
These were followed by further analysis of the min-max relation \cite{FS02} 
and Edmonds-Gallai decomposition \cite{SS04}.
A combinatorial approach to the path-matchings was proposed in \cite{SW02} 
and completed by Pap \cite{Pap07}, 
who addressed a further generalization, 
\emph{the even factor problem} \cite{CG01}.

Here, let $D=(V,A)$ be a digraph. 
A subset of arcs $F \subseteq A$ is called a \emph{path-cycle factor} if 
it is a vertex-disjoint collection of directed cycles (dicycles) and directed paths (dipaths). 
Equivalently, 
an arc subset $F$ is a path-cycle factor 
if, 
in the subgraph $(V,F)$, 
the indegree and outdegree of every vertex are at most one. 
An \emph{even factor} is a path-cycle factor excluding 
dicycles of odd length (odd dicycles). 

While 
the maximum even factor problem is NP-hard, 
in \emph{\ocs{}} digraphs 
it enjoys 
min-max theorems \cite{CG01,PS04}, 
the Edmonds-Gallai decomposition \cite{PS04}, 
and 
polynomial-time algorithms \cite{CG01,Pap07}. 
A digraph is called \emph{\ocs{}} if every odd dicycle has its reverse dicycle. 
Moreover, 
a maximum-weight even factor can be found in polynomial time 
in \ocs{} weighted digraphs, 
which are \ocs{} digraphs with arc-weight such that 
the total weight of the arcs in an odd dicycle is 
equal to that of its reverse dicycle. 
The maximum-weight matching problem is straightforwardly reduced to 
the maximum-weight even factor problem in \ocs{} weighted digraphs (see Sect.\ \ref{SECbief}). 
The assumption of odd-cycle symmetry of (weighted) digraphs is supported by 
its relation to 
discrete convexity \cite{KT09}. 

The independent even factor problem is 
a common generalization of the 
even factor and matroid intersection problems. 
In \ocs{} digraphs 
it 
admits 
combinatorial polynomial algorithms \cite{CG01,IT08} and 
a decomposition theorem \cite{IT08}, 
which extends the Edmonds-Gallai decomposition and 
the principal partition for matroid intersection \cite{Iri79,IF81}. 
In \ocs{} weighted digraphs, 
a linear program with dual integrality and 
a combinatorial algorithm for the weighted independent even factor problem 
have been presented in \cite{Tak12wief}. 

The results are summarized in Table \ref{TABef}. 
For more details, 
readers are referred to a survey paper \cite{Tak10}. 

	\begin{table}
	\caption{Results for path-matchings and even factors.  
	(E), (A), (C) denote the ellipsoid method, an algebraic algorithm, and a combinatorial algorithm, 
	respectively.}

	\label{TABef}

	\centering
	\begin{tabular}{|l|l|l|}
	\hline
	 								& Path-matchings 						& Independent path-matchings 				\\ \hline
	Min-max theorem 				& Cunningham--Geelen \cite{CG97}  		& Cunningham--Geelen \cite{CG97} 		\\
									& Frank--Szeg\H{o} \cite{FS02}	 		& 										\\
	Algorithm 						& Cunningham--Geelen \cite{CG97} (E) 	& Cunningham--Geelen \cite{CG97} (E) 	\\
	Decomposition theorem 			& Spille--Szeg\H{o} \cite{SS04} 		& Iwata--Takazawa \cite{IT08} 		\\
	LP formulation					& Cunningham--Geelen \cite{CG97} 		& Cunningham--Geelen \cite{CG97} 		\\
	Algorithm (Weighted) 			& Cunningham--Geelen \cite{CG97} (E) 	& Cunningham--Geelen \cite{CG97} (E) 	\\\hline\hline
	 								& Even factors							& Independent even factors \\ \hline
	Min-max theorem 				& Cunningham--Geelen \cite{CG01}	 	& Iwata--Takazawa \cite{IT08} \\
									& Pap--Szeg\H{o} \cite{PS04} 			& \\
	Algorithm 						& Cunningham--Geelen \cite{CG01} (A)  	& Iwata--Takazawa \cite{IT08} (C) \\
									& Pap \cite{Pap07} (C)					& \\
	Decomposition theorem 			& Pap--Szeg\H{o} \cite{PS04} 			& Iwata--Takazawa \cite{IT08} \\
	LP formulation 					& Kir\'{a}ly--Makai \cite{KM04} 		& Takazawa \cite{Tak12wief} \\
	Algorithm (Weighted) 			& Takazawa \cite{Tak08} (C)				& Takazawa \cite{Tak12wief} (C)\\\hline
	\end{tabular}

	\end{table}

\subsubsection{Restricted $t$-matching}

The triangle-free $2$-matching and square-free $2$-matching problems are types of 
the \emph{restricted $2$-matching problem}, 
wherein 
a main objective is to provide a tight relaxation of the 
TSP\@. 

Here, let $G=(V,E)$ be a simple undirected graph. 
For $v \in V$, 
let $\delta(v) \subseteq E$ denote the set of edges incident to $v$. 
For a positive integer $t$, 
a vector $x \in \ZZ_+^E$ is called a \emph{$t$-matching} (resp.,\ \emph{$t$-factor}) if 
$\sum_{e \in \delta(v)}x(e) \le t$ (resp.,\ $\sum_{e \in \delta(v)}x(e) = t$) for each $v \in V$. 
A $2$-matching $x$ is called \emph{triangle-free} if it excludes a triple of edges $(e_1,e_2,e_3)$ 
such that 
$e_1$, $e_2$, and $e_3$ form a cycle and $x(e_1)=x(e_2)=x(e_3)=1$. 
For the maximum-weight triangle-free $2$-matching problem, 
a combinatorial algorithm, 
together with a totally dual integral formulation, is designed \cite{CP80,Pap04}.

If we only deal with simple $2$-matchings $x \in \{0,1\}^E$, 
the triangle-free $2$-matching problem becomes much more complicated \cite{HartD}. 
A vector $x \in \{0,1\}^E$ is identified by an edge set $F \subseteq E$ such that $e \in F$ if and only if $x(e)=1$. 
That is, 
an edge set $F \subseteq E$ is called a simple $t$-matching if $|F \cap \delta(v)| \le t$ for each $v \in V$. 
For a positive integer $k$, 
a simple $2$-matching is called \emph{\c{k}-free} if it excludes cycles of length at most $k$. 
Finding a maximum simple \c{k}-free $2$-matching is NP-hard for $k\ge 5$, 
and open for $k=4$. 

In contrast, 
the simple \c{4}-free $2$-matching problem becomes tractable in bipartite graphs. 
We often refer to simple \c{4}-free $2$-matching in a bipartite graph as \emph{square-free $2$-matching}. 
For the square-free $2$-matching problem, 
extensions of the classical matching theory, 
such as 
min-max theorems \cite{Fra03,Hart06,Kir99,Kir09}, 
combinatorial algorithms \cite{Hart06,Pap07}, 
and 
decomposition theorems \cite{Tak17DAM}, 
have been established. 

Further, 
two generalizations of the square-free $2$-matchings have been proposed. 
Frank \cite{Fra03} introduced a generalization, 
\emph{$\ktt$-free $t$-matchings} in bipartite graphs, 
and provided a min-max theorem. 
Another generalization introduced in \cite{Tak17} 
is \emph{\ufeas{} $2$-matchings}. 
For $\U \subseteq 2\sp{V}$, 
a $2$-matching is \emph{\ufeas} if it does not contain a $2$-factor in $U$ for each $U \in \U$. 
Takazawa \cite{Tak17} presented a min-max theorem, 
a combinatorial algorithm, 
and 
decomposition theorems 
for the case where each $U \in \U$ induced a Hamilton-laceable graph \cite{Sim78},
by extending the aforementioned theory for square-free $2$-matchings in bipartite graphs.

	For the weighted case, 
	Kir\'aly \cite{Kir09} proved that finding a maximum-weight square-free $2$-matching is NP-hard (see also \cite{Fra03}). 
	However, 
	Makai \cite{Mak07} presented a linear programming formulation of the weighted $\ktt$-free $t$-matching problem in bipartite graphs 
	with dual integrality for a special case where the weight is \emph{vertex-induced} on each $\ktt$
	(see Definition \ref{DEFvertexinduced}). 
	Takazawa \cite{Tak09} designed a combinatorial algorithm for this case. 
	The assumption on the weight is supported by discrete convexity in \cite{KST12}, 
	which proved that 
	maximum-weight square-free $2$-matchings in bipartite graphs 
	induce an M-concave function on a jump system \cite{Mur06} 
	if and only if the edge weight is vertex-induced on every square.

	The aforementioned results on simple restricted $t$-matchings are for bipartite graphs. 
	We should also mention another graph class where restricted $t$-matchings are tractable, 
	degree-bounded graphs. 
	In subcubic graphs, 
	optimal $2$-matchings excluding the cycles of length three and/or four are tractable \cite{BK12,HL11,HL13,Kob14}. 
	Some of these results are generalized to $t$-matchings 
	excluding $K_{t+1}$ and $K_{t,t}$ in graphs with maximum degree of up to $t+1$ \cite{BV10,KY12}. 
	In bridgeless cubic graphs, 
	there always exists $2$-factors covering all $3$- and $4$-edge cuts \cite{KS08}, 
	and found in polynomial time \cite{BIT13}. 
	A minimum-weight $2$-factor covering all $3$-edge cuts can also be found in polynomial time \cite{BIT13}.

\subsection{Contribution}

It is noteworthy that Pap \cite{Pap07} presented 
combinatorial algorithms for  
the even factor and square-free $2$-matching problems in the same paper. 
These algorithms were based on similar techniques to shrink odd cycles and squares, 
and 
were improved in complexity by Babenko \cite{Bab12}. 
However, 
to the best of our knowledge, 
there has been no comprehensive theory 
that considers both algorithm.

In this paper, 
we discuss \emph{\ufeas{} $t$-matchings} 
(see Definition \ref{DEFufeas}). 
The \ufeas{} $t$-matching problem generalizes not only 
the \ufeas{} $2$-matching problem \cite{Tak17} 
but also 
all of the aforementioned generalizations of the matching problem, 
as well as the TSP and the arborescence problem (see Sect.\ \ref{SECef}). 
The objective of this paper is to provide a unified understanding of these problems. 
One example of such an understanding is 
that $\U$-feasibility is a common generalization of the 
blossom constraint for the nonbipartite matching problem and 
the subtour elimination constraint for the TSP\@. 

The main contributions of this paper are a min-max theorem and an efficient combinatorial algorithm for the maximum \ufeas{} $t$-matching problem in bipartite graphs
under a plausible assumption.  
Note that 
the \ufeas{} $t$-matching problem in \emph{bipartite} graphs can describe the \emph{nonbipartite} matching problem. 
We also remark that it reasonable to impose some assumption in order to 
obtain a tractable class of the \ufeas{} $t$-matching problem. 
(Recall that it can describe the Hamilton cycle problem.)
Indeed, 
we assume that an expanding technique is always valid for the excluded $t$-factors (see Definition \ref{DEFexpansion}). 
This assumption is sufficiently broad to include the instances reduced from nonbipartite matchings, 
even factors in \ocs{} digraphs, 
triangle-free $2$-matchings, 
square-free $2$-matchings, 
and 
arborescences. 
We then show that 
the \emph{$C_{4k+2}$-free $2$-matching problem}, 
a new class of the restricted $2$-matching problem, 
is contained in our framework. 
We prove that 
the $C_{4k+2}$-free $2$-matching problem under a certain assumption 
is described as the \ufeas{} $2$-matching problem under our assumption, 
and thus 
obtain a new class of the restricted $2$-matching problem which can be solved efficiently.

The proposed algorithm commonly extend those for  
nonbipartite matchings, 
even factors, 
triangle-free $2$-matchings, 
square-free $2$-matchings, 
and arborescences. 
Generally, 
the proposed algorithm runs in $\order{t(|V|^3 \alpha + |V|^2 \beta)}$ time, 
where 
$\alpha$ and $\beta$ are the time required to check the feasibility of an edge set 
and expand the shrunk structures, 
respectively. 
The complexities $\alpha$ and $\beta$ are typically small, 
i.e., 
constant or $\order{n}$, 
in the above specific cases (see Sect.\ \ref{SECex}). 

We further establish a linear programming description with dual integrality 
and 
a primal-dual algorithm for the maximum-weight \ufeas{} $t$-matching problem in bipartite graphs. 
The complexity of the algorithm is $\order{t(|V|^3 (|E|+\alpha) + |V|^2 \beta)}$.
For the weighted case, 
we also assume the edge weight to be vertex-induced for each $U \in \U$. 
Note that this assumption 
is also inevitable 
because the maximum-weight square-free $2$-matching problem is NP-hard. 
To be more precise, 
our assumption 
exactly corresponds to the previous assumptions for the maximum-weight even factor and square-free $2$-matching problems, 
both of which are plausible from the discrete convexity perspective \cite{KST12,KT09}. 
This would be an example of a unified understanding of 
even factors and square-free $2$-matchings.

	This paper is organized as follows. 
	In Sect.\ \ref{SECdef}, 
	we present a precise definition of the proposed framework. 
	Sect.\ \ref{SECunweighted} 
	describes 
	a min-max theorem and a combinatorial algorithm for 
	the maximum \ufeas{} $t$-matching problem. 
	In Sect.\ \ref{SECw}, 
	we extend these results to a linear programming formulation with dual integrality and 
	a primal-dual algorithm for the maximum-weight \ufeas{} $t$-matching problem. 
	Conclusions are presented in Sect.\ \ref{SECconcl}. 

\section{Our Framework}
\label{SECdef}

In this section, 
we define the proposed framework 
and 
explain how the previously mentioned problems are reduced. 

\subsection{Optimal $t$-matching Excluding Prescribed $t$-factors}
\label{SECproblem}

Here, 
let $G=(V,E)$ be a simple undirected graph. 
An edge $e$ connecting $u,v \in V$ is denoted by $\{u,v\}$. 
If $G$ is a digraph, 
then 
an arc from $u$ to $v$ is denoted by $(u,v)$. 
For $X \subseteq V$, 
let $G[X] =(X,E[X])$ denote the subgraph of $G$ induced by $X$, 
i.e.,\  
$E[X] = \{\{u,v\} \mid \mbox{$u,v\in X$}, \mbox{$\{u,v\} \in E$}\}$. 
Similarly, 
for $F \subseteq E$, 
define $F[X] = \{\{u,v\} \mid \mbox{$u,v\in X$}, \mbox{$\{u,v\} \in F$}\}$. 
If $X,Y \subseteq V$ are disjoint, 
then $F[X,Y]$ denotes the set of edges in $F$ connecting $X$ and $Y$. 

Recall that 
$\delta(v) \subseteq E$ denotes the set of edges incident to $v \in V$. 
For $F \subseteq E$ and $v \in V$, 
let $\deg_F(v) = |F \cap \delta(v)|$. 
Then, 
$F$ is 
a \emph{$t$-matching} if 
$\deg_F(v) \le t$ for each $v \in V$, 
and a \emph{$t$-factor} if $\deg_F(v) = t$ for every $v \in V$.

\begin{definition}
\label{DEFufeas}
For 
a graph $G=(V,E)$ and 
$\U \subseteq 2^V$, 
a $t$-matching $F \subseteq E$ is called \emph{\ufeas} if 
\begin{align}
\label{EQdefinition}
|F[U]| \le \left\lfloor \frac{t|U|-1}{2} \right\rfloor\end{align} 
for each $U \in \U$. 
\end{definition}

Equivalently, 
a $t$-matching $F$ in $G$ is not \ufeas{} if 
$F[U]$ is a $t$-factor in $G[U]$ for some $U \in \U$. 
This concept is a further generalization of 
the \ufeas{} $2$-matchings introduced in \cite{Tak17}. 

In what follows, 
we consider the maximum \ufeas{} $t$-matching problem,  
whose goal is to find a \ufeas{} $t$-matching $F$ maximizing $|F|$. 
We further deal with the maximum-weight \ufeas{} $t$-matching problem, 
in which the objective is 
to find a \ufeas{} $t$-matching $F$ maximizing $w(F)=\sum_{e \in F}w(e)$ 
for a given edge-weight vector $w \in \RR_+\sp{E}$. 
For a vector $x \in \RR\sp{E}$ and $F \subseteq E$, 
in general we denote $x(F) = \sum_{e \in F}x(e)$. 

In discussing the weighted version, 
we assume that $w$ is \emph{vertex-induced on each $U \in \U$}. 
\begin{definition}
\label{DEFvertexinduced}
For a graph $G=(V,E)$, 
a vertex subset $U \subseteq V$, 
and an edge-weight $w \in \RR\sp{E}$, 
$w$ is called \emph{vertex-induced on $U$} if 
there exists a function $\pi_U: U \to \RR$ on $U$ such that 
$w(\{u,v\}) = \pi_U(u) +\pi_U(v) $ for each $\{u,v\} \in E[U]$. 
\end{definition}

Here, 
as noted previously, 
not only the maximum-weight square-free $2$-matching problem in \emph{bipartite} graphs, 
but also 
many generalizations in \emph{nonbipartite} graphs, 
such as the maximum-weight matching, even factor, and 
triangle-free $2$-matching, 
and 
arborescence problems, 
are 
reduced to the maximum-weight \ufeas{} $t$-matching problem in bipartite graphs 
under the assumption that $w$ is vertex-induced on each $U \in \U$. 
The reduction is shown in Sect.\ \ref{SECef}.  

	\subsection{Special Cases of \ufeas{} $t$-matching in Bipartite Graphs}
	\label{SECef}

	Here we 
	demonstrate how the problems in the literature are reduced to the \ufeas{} $t$-matching problem. 
	In Sect.\ \ref{SECbia}--\ref{SECarb}, 
	we demonstrate reductions to the \ufeas{} $t$-matching problem in \emph{bipartite} graphs, 
	which is the primary focus of this paper. 
	How our algorithm works for those specific cases is described in Sect.\ \ref{SECex}. 
	In Sect.\ \ref{SECnonbi}, 
	we show reductions to 
	the \ufeas{} $t$-matching problem in \emph{nonbipartite} graphs. 
	While we do not discuss solvability in nonbipartite graphs in this paper, 
	we show these reductions in order to demonstrate the generality of the proposed framework. 

	\subsubsection{Restricted $2$-matchings and Hamilton Cycles in Bipartite Graphs}
	\label{SECbia}
	Let $G=(V,E)$ be a simple bipartite graph. 
	If $t=2$ and $\U = \{U \subseteq V \mid |U| = 4\}$, 
	then a \ufeas{} $2$-matching in $G$ is exactly a square-free $2$-matching in $G$. 
	Generally, 
	a simple \c{k}-free $2$-matching in $G$ is exactly a \ufeas{} $2$-matching in $G$ where 
	$\U = \{U \subseteq V \mid 1 \le |U| \le k\}$. 
	For example, 
	if $\U = \{U \subseteq V \mid 1\le |U| \le |V|-1\}$, 
	then 
	the maximum \ufeas{} $2$-matching problem includes the Hamilton cycle problem, 
	i.e.,\ 
	if a maximum \ufeas{} $2$-matching is of size $|V|$, 
	it is a Hamilton cycle. 
	
	Square-free $2$-matchings are generalized to $\ktt$-free $t$-matchings in bipartite graphs \cite{Fra03}. 
	A simple $t$-matching is called \emph{$K_{t,t}$-free} if it does not contain $K_{t,t}$ as a subgraph. 
	A $K_{t,t}$-free $t$-matching in a bipartite graph 
	is exactly a \ufeas{} $t$-matching, where $\U = \{U \subseteq V \mid |U| = 2t\}$.

	\subsubsection{Matchings and Even Factors in Nonbipartite Graphs}
	\label{SECbief}

	First, we show the reduction of the nonbipartite matching problem to the even factor problem. 
	Then, 
	we present the reduction of the even factor problem to the \ufeas{} $t$-matching problem in bipartite graphs. 

	Consider the maximum-weight matching problem in a nonbipartite graph $G=(V,E)$ with weight $w \in \RR\sp{E}$. 
	This 
	can be reduced to the maximum-weight even factor problem 
	in a digraph $D=(V,A)$, 
	where 
	$A=\{ (u,v),(v,u) \mid \{u,v\}\in E \}$, 
	and 
	an arc-weight $w' \in \RR\sp{A}$ is defined by 
	$w'((u,v)) = w'((v,u)) = w(\{u,v\})$. 
	For a matching $M\subseteq E$ in $G$, 
	it is clear that 
	there exists 
	an even factor $F \subseteq A$ in $D$ with $w'(F) = 2w(M)$. 
	Conversely, 
	for an even factor $F \subseteq A$ in $D$, 
	there exists a matching $M \subseteq E$ with $w(M) \ge w'(F)/2$. 

	Here, 
	let $D=(V,A)$ and $w \in \RR^{A}$ be an arbitrary 
	instance of the maximum-weight even factor problem 
	(Fig.\ \ref{FIGef}). 
	Then, 
	define an instance of the maximum-weight \ufeas{} $t$-matching problem as follows. 
	Let 
	$t=1$. 
	For each $u \in V$, 
	let 
	$\p{u}$ and $\m{u}$ be two copies of $u$, 
	and 
	define 
	$\p{\hat{V}} =\{\p{u} \mid u \in V\}$ and $\m{\hat{V}} =\{\m{u} \mid u \in V\}$. 
	For $U \subseteq V$, 
	denote $\hat{U} = \bigcup_{u \in U}\{\p{u},\m{u}\}$. 
	Now define a bipartite graph $\hat{G}=(\hat{V},\hat{E})$, 
	$\hat{\U} \subseteq 2^{\hat{V}}$, 
	and 
	an edge-weight $\hat{w} \in \RR^{\hat{E}}$ 
	by
	\begin{align}
	&{}\hat{E} = \{\{\p{u},\m{v}\} \mid (u,v) \in A\}, 
	\quad
	\hat{\U} = \{ \hat{U} \mid \mbox{$U \subseteq V$}, \mbox{$|U|$ is odd}\}, 
	\label{EQef}\\
	&{}
	\hat{w}(\{\p{u},\m{v}\}) = w((u,v)) \quad (\{\p{u},\m{v}\} \in \hat{E}).
	\end{align}

	Note that a $1$-matching in $\hat{G}$ corresponds to a path-cycle factor in $D$. 
	For $\hat{U}\in \hat{\U}$, 
	a $1$-factor in $\hat{G}[\hat{U}]$ corresponds to 
	a vertex-disjoint collection of cycles through $U$ in $D$, 
	which should contain at least one odd cycle. 
	Thus, 
	$\hat{\U}$-feasibility of a $1$-matching in $\hat{G}$ exactly corresponds to 
	excluding odd cycles in a path-cycle factor in $D$, 
	which results in an even factor. 

	If $(D, w)$ is \ocs, 
	$\hat{G}[\hat{U}]$ is a symmetric bipartite graph and 
	$\hat{w}$ is vertex-induced on $\hat{U}$ 
	for each $\hat{U} \in \hat{\U}$. 
	Thus, 
	the instance constructed in the reduction satisfies 
	our assumption 
	that $\hat{w}$ is vertex-induced on each $\hat{U} \in \hat{\U}$. 
	
	\begin{figure}
	\centering
	\includegraphics[height=.25\textheight]{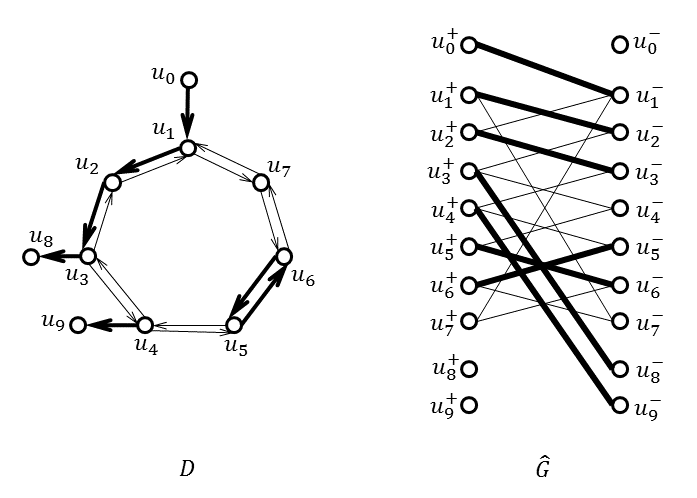}
	\caption{The maximum even factor problem in $D$ is reduced to the maximum $\hat{\U}$-feasible 1-matching problem in $\hat{G}$, 
	where $\hat{\U} = \{ \hat{U} \mid \mbox{$U \subseteq V$}, \mbox{$|U|$ is odd}\}$. 
	The set of thick arcs in $D$ is an even factor 
	that corresponds to the set of thick edges in $\hat{G}$, 
	which is a $\hat{\U}$-feasible $1$-matching.}
	\label{FIGef}
	\end{figure}

	\subsubsection{Triangle-free $2$-matchings in Nonbipartite Graphs}
	\label{SECtri}
	Here, 
	let $G=(V,E)$ be an undirected nonbipartite graph 
	and $w \in \RR_+^E$. 
	Now 
	define $\p{\hat{V}}$, $\m{\hat{V}}$, and $\hat{U}$ 
	as described in Sect.\ \ref{SECbief}. 
	Let $t=1$ 
	and 
	define $\hat{G} =(\hat{V}, \hat{E})$, 
	$\hat{\U} \subseteq 2\sp{\hat{V}}$, 
	and $\hat{w}\in \RR\sp{\hat{E}}$ 
	by 
	\begin{align*}
	&{}\p{\hat{V}} =\{\p{u} \mid u \in V\},\qquad \m{\hat{V}} =\{\m{v} \mid v \in V\}, \\
	&\hat{E} = \bigcup_{\{u,v\} \in E}\{ \{\p{u},\m{v}\}, \{\p{v},\m{u}\} \}, \qquad
	\hat{\U} = \{ \hat{U} \mid \mbox{$U \subseteq V$, $|U| =3$} \},\\
	&\hat{w}(\{\p{u},\m{v}\})=\hat{w}(\{\p{v},\m{u}\}) = w(\{u,v\}) \quad (\{u,v\} \in E). 
	\end{align*}
	It is straightforward that 
	a triangle-free $2$-matching $F$ in $G$ corresponds to a $\hat{\U}$-feasible $1$-matching $\hat{F}$ in $\hat{G}$ such that $w(F) = \hat{w}(\hat{F})$, 
	and vice versa (Fig.\ \ref{FIGtriangle}). 
	
	\begin{figure}
	\centering
	\includegraphics[height=.18\textheight]{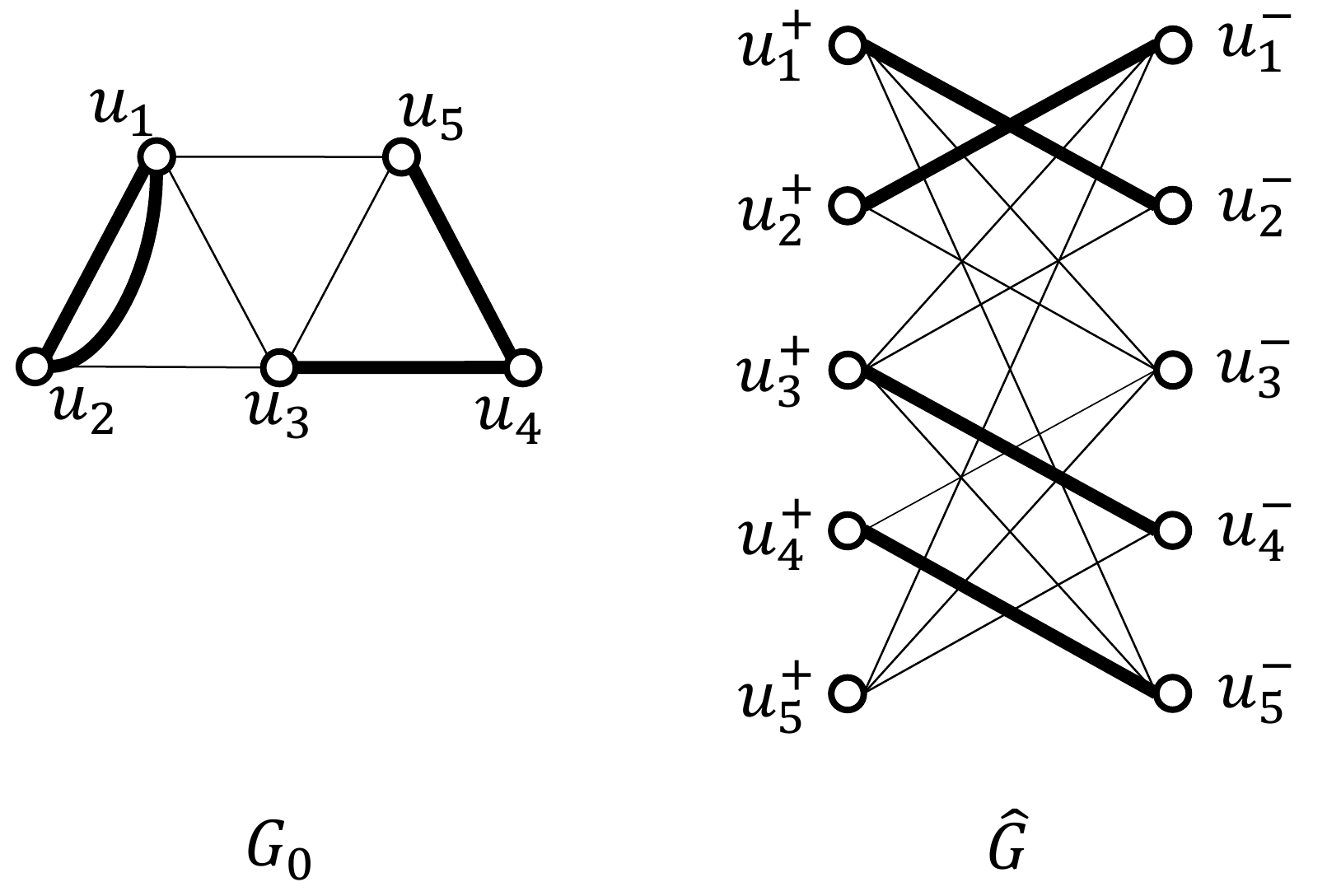}
	\caption{The maximum triangle-free $2$-matching problem in $G$ is reduced to the maximum $\hat{\U}$-feasible 1-matching problem 
	in $\hat{G}$, 
	where $\hat{\U} = \{ \hat{U} \mid \mbox{$U \subseteq V$}, \mbox{$|U|=3$}\}$. 
	The set of thick edges in $G$ is a triangle-free $2$-matching that 
	corresponds to the set of thick edges in $\hat{G}$, 
	which is a $\hat{\U}$-feasible $1$-matching.}
	\label{FIGtriangle}
	\end{figure}

	\subsubsection{Matroids and Arborescences}
	\label{SECarb}
	Here, 
	let $\matroid$ be a matroid with ground set $V$ 
	and circuit family $\Cir \subseteq 2\sp{V}$. 
	The problem of finding a maximum-weight 
	independent set in $\Matroid$ with respect to $w \in \RR\sp{V}$ is described as 
	the maximum-weight $\hat{\U}$-feasible $t$-matching problem 
	in a bipartite graph $\hat{G}$ as follows. 
	Let $t=1$. 
	Define $\hat{G} =(\hat{V}, \hat{E})$, 
	$\U \subseteq 2\sp{\hat{V}}$, 
	and $\hat{w}\in \RR\sp{\hat{E}}$ 
	by 
	\begin{align*}
	&{}\hat{V}^+ = \{ \p{u} \mid u \in V \}, \quad  
	\hat{V}^- = \{ \m{v} \mid v \in V \}, \\
	&{}\hat{E} = \{ \{\p{v}, \m{v}\} \mid v \in V \}, \quad
	\hat{\U} = \left\{ \bigcup_{v \in C} \{\p{v}, \m{v}\} \mathrel{}\middle|\mathrel{} C \in \cir \right\}, \\
	&{}\hat{w}(\{\p{v}, \m{v}\}) = w(v). 
	\end{align*}
	Then, it is straightforward that 
	$I \subseteq V$ is an independent set in $\matroid$ 
	if and only if 
	the edge set $\{ \{\p{v}, \m{v}\} \mid v \in I \}$ is a 
	$\hat{\U}$-feasible $1$-matching in $\hat{G}$. 
	
	Arborescences in a digraph are a special case of matroid intersection. 
	Although we do not know how to describe matroid intersection in our framework, 
	the arborescence problem 
	cab be reduced to the 
	\ufeas{} problem in a bipartite graph as follows. 
	Let $D=(V,A)$ 
	be a digraph in which we are asked to find a maximum-weight arborescence 
	with respect to an arc weight $w\in \RR\sp{A}$. 
	Let $t=1$, 
	and 
	define $\hat{G} =(\hat{V}, \hat{E})$, 
	$\hat{\U} \subseteq 2\sp{\hat{V}}$, 
	and $\hat{w}\in \RR\sp{\hat{E}}$ 
	by 
	\begin{align*}
	&{}\hat{V}^+ = \{ \p{a} \mid a \in A \}, \quad  
	\hat{V}^- = \{ \m{v} \mid v \in V \}, \quad
	\hat{E} = \{ \{\p{a}, \m{v}\} \mid \mbox{$v$ is the head of $a$ in $D$} \}, \\
	&{}\hat{\U} = \left\{ \{\p{a} \mid a \in A(C) \} \cup \{\m{v} \mid v \in V(C) \} \mid \mbox{$C$ is a directed cycle in $D$} \right\}, \\
	&{}\hat{w}(\{\p{a}, \m{v}\}) = w(a),
	\end{align*}
	where $A(C)$ and $V(C)$ denote the sets of arcs and vertices of a directed cycle $C$, 
	respectively. 
	Again, it is straightforward that 
	$A' \subseteq A$ is an arborescence in $D$ 
	if and only if 
	the edge set $$\{ \{\p{a}, \m{v}\} \mid \mbox{$a \in A'$, $v$ is the head of $a$ in $D$} \}$$ 
	is a 
	$\hat{\U}$-feasible $1$-matching in $\hat{G}$. 
	
	\subsubsection{Special Cases of Nonbipartite \ufeas{} $t$-matchings}
	\label{SECnonbi}
	The simple \c{k}-free $2$-matching problem in a nonbipartite graph $G=(V,E)$ 
	is exactly the \ufeas{} $2$-matching problem in $G$, 
	where 
	$\U = \{U \subseteq V \mid 1\le |U| \le k\}$. 
	For example, 
	if $\U = \{U \subseteq V \mid 1\le |U| \le|V|-1\}$, 
	then 
	a \ufeas{} $2$-matching of size $|V|$ is 
	exactly a Hamilton cycle. 

	The $K_{t+1}$-free $t$-matching problem \cite{BV10} 
	is a generalization of the simple triangle-free $2$-matching problem. 
	A simple $t$-matching is called \emph{$K_{t+1}$-free} 
	if it does not contain $K_{t+1}$ as a subgraph. 
	Now 
	a $K_{t+1}$-free $t$-matching is exactly 
	a \ufeas{} $t$-matching, where $\U=\{U\subseteq V \mid |U|= t+1\}$. 
	
	A \emph{$2$-factor covering prescribed edge cuts} is also described as a \ufeas{} $2$-matching. 
	Here, 
	let $C \subseteq E$ be an edge cut, 
	i.e., 
	$C$ is an inclusion-minimal edge subset such that 
	deleting $C$ makes $G$ disconnected. 
	A $2$-factor $F$ covers $C$ if $F \cap C \neq \emptyset$. 
	For a family $\mathcal{C}$ of edge cuts, 
	a $2$-factor covering every edge cut in $\mathcal{C}$ is a relaxed concept of Hamilton cycles, 
	i.e.,\ 
	if $\mathcal{C}$ is the family of all edge cuts in $G$, 
	then a $2$-factor covering all edge cuts in $\mathcal{C}$ is a Hamilton cycle. 
	
	Now a $2$-factor covering every edge cuts in $\mathcal{C}$ is described as a \ufeas{} $2$-factor by 
	putting 
	$\mathcal{U} = \{ U \subseteq V \mid \mbox{$\delta(U) \in \mathcal{C}$} \}$, 
	where 
	$\delta(U)$ denotes $E[U, V \setminus U]$,
	i.e.,\ 
	the set of edges connecting $U$ and $V \setminus U$. 
	For example, 
	a $2$-factor covering all $3$- and $4$-edge cuts \cite{BIT13,KS08} is a \ufeas{} $2$-factor, 
	where $\U =\{ U \subseteq V \mid \mbox{$\delta(U)$ is a $3$-edge cut or a $4$-edge cut}  \}$.

\section{Maximum \ufeas{} $t$-matching}
\label{SECunweighted}

In this section, 
we present a min-max theorem and a combinatorial algorithm 
for the maximum \ufeas{} $t$-matching problem in bipartite graphs. 
The proposed algorithm commonly extends those for 
nonbipartite matchings \cite{Edm65}, 
even factors \cite{Pap07}, 
triangle-free $2$-matchings \cite{CP80}, 
square-free $2$-matchings \cite{Hart06,Pap07}, 
and 
arborescences \cite{CL65,Edm67}. 
We begin with a weak duality theorem in Sect.\ \ref{SECweak}. 
The proposed algorithm is described 
in Sect.\ \ref{SECalg}, 
and 
its validity is proved in Sect.\ \ref{SECminmax} 
together with 
the min-max theorem (strong duality theorem). 
In Sect.\ \ref{SECex},
we demonstrate 
how the algorithm works in such special cases.

\subsection{Weak Duality}
\label{SECweak}

Here, let $G=(V,E)$ be an undirected graph 
and 
let 
$\U \subseteq 2\sp{V}$. 
For weak duality, 
$G$ does not need to be bipartite. 
For $X \subseteq V$, 
define 
$\U_X \subseteq \U$ 
and 
$C_X \subseteq X$ by 
\begin{align*}
&{}\U_X = \{ U \in \U \mid \mbox{$U$ forms a component in $G[X]$} \}, &
&{}C_X = X \sm \bigcup_{U \in \U_X}U. 
\end{align*}
Then,  
the following inequality holds for an arbitrary \ufeas{} $t$-matching $F \subseteq E$ and $X \subseteq V$. 
\begin{lemma}
\label{LEMweak}
Let 
$G=(V,E)$ be an undirected graph, 
$\U \subseteq 2\sp{V}$, 
and 
$t$ be a positive integer. 
For an arbitrary \ufeas{} $t$-matching $F \subseteq E$ 
and $X \subseteq V$, 
it holds that 
\begin{align}
\label{EQweak}
|F| \le t|X| + |E[C_{V\sm {X}}]| + \sum_{U \in \U_{V \sm X}}\left\lfloor \frac{t|U|-1}{2} \right\rfloor. 
\end{align}
\end{lemma}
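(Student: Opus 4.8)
The plan is to split $F$ according to the partition $V = X \cup (V \sm X)$ and to bound separately the edges of $F$ touching $X$ and those lying entirely in $V \sm X$. Write $Y = V \sm X$, so that $\U_Y$ and $C_Y$ are given by the definitions of $\U_X$ and $C_X$ with $Y$ in place of $X$. Every edge of $F$ either meets $X$ or has both endpoints in $Y$, giving the clean decomposition $|F| = \bigl(|F[X]| + |F[X,Y]|\bigr) + |F[Y]|$, and I would bound the two groups independently.

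For the edges meeting $X$ I would use only the $t$-matching degree constraint. Summing $\deg_F(v) \le t$ over $v \in X$ yields $2|F[X]| + |F[X,Y]| \le t|X|$, and since $|F[X]| \ge 0$ this gives $|F[X]| + |F[X,Y]| \le t|X|$; that is, the number of edges of $F$ meeting $X$ is at most $t|X|$, which accounts for the first term on the right-hand side of \eqref{EQweak}. For the edges inside $Y$, the key observation is that $F[Y] \subseteq E[Y]$ contains no edge joining two distinct connected components of $G[Y]$, so $|F[Y]|$ splits as a sum of $|F[K]|$ over the components $K$ of $G[Y]$. By definition the members of $\U_Y$ are precisely those components that happen to lie in $\U$, so
\[
|F[Y]| = \sum_{U \in \U_Y} |F[U]| + |F[C_Y]|.
\]
On each special component $U \in \U_Y \subseteq \U$, the $\U$-feasibility of $F$ gives $|F[U]| \le \lfloor (t|U|-1)/2 \rfloor$, which supplies the summation term of \eqref{EQweak}, while on the remainder the trivial bound $|F[C_Y]| \le |E[C_Y]|$ supplies the middle term. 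Adding the two groups reproduces \eqref{EQweak} exactly.

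There is no genuinely hard step here; the argument is careful bookkeeping and uses only the two defining properties of a $\U$-feasible $t$-matching (the degree bound and the per-$U$ edge bound), which is consistent with this being a weak-duality statement that holds without any bipartiteness hypothesis. The one point deserving attention is the definition-level fact that distinct members of $\U_Y$ are disjoint whole components of $G[Y]$ and that the non-special components union to exactly $C_Y$; this is what guarantees that $\sum_{U \in \U_Y} |F[U]| + |F[C_Y]|$ recovers $|F[Y]|$ with neither double counting nor omitted edges, so that the three terms assemble precisely into the claimed bound.
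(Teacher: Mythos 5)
Your proof is correct and follows essentially the same route as the paper's: bound the edges meeting $X$ by summing the degree constraint $\deg_F(v)\le t$ over $X$, bound the edges inside $V\sm X$ component-by-component using $\U$-feasibility on the components in $\U_{V\sm X}$ and the trivial bound $|F[C_{V\sm X}]|\le|E[C_{V\sm X}]|$, and add. The only difference is cosmetic: you spell out the component decomposition behind the paper's inequality \eqref{EQcritical} and drop the term $|F[X]|$ before summing, whereas the paper keeps $2|F[X]|$ and observes at the end that $|F|$ is at most the left-hand side of the summed inequality.
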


\begin{proof}
By counting the number of edges in $F $ incident to $X$, 
we obtain 
\begin{align}
\label{EQsaturated}
&{}2|F[X]| + |F[X, V \sm {X}]| \le t|X|.
\end{align}
In $G[V \setminus X]$, 
it holds that 
\begin{align}
\label{EQcritical}
&{} |F[V \sm {X}]| \le |E[C_{V \sm {X}}]| + \sum_{U \in \U_{V \sm X}}\left\lfloor \frac{t|U|-1}{2} \right\rfloor. 
\end{align}
By summing \eqref{EQsaturated} and \eqref{EQcritical}, 
we obtain 
\begin{align}
\label{EQsumup}
&{}2|F[X]| + |F[X, V \sm {X}]| + |F[V \sm {X}]| \le  t|X| + |E[C_{V \sm {X}}]| + \sum_{U \in \U_{V \sm X}}\left\lfloor \frac{t|U|-1}{2} \right\rfloor. 
\end{align}
Since $|F| = |F[X]| + |F[X, V \sm {X}]| + |F[V \sm {X}]|$ is at most the left-hand side of \eqref{EQsumup}, 
we obtain \eqref{EQweak}. 
\end{proof}

\subsection{Algorithm}
\label{SECalg}

Hereafter, 
we assume that $G$ is bipartite. 
Let $G=(V,E)$ be a simple undirected bipartite graph. 
Here, 
we denote the two color classes of $V$ by $\p{V}$ and $\m{V}$. 
For $X \subseteq V$, 
denote $\p{X} = X \cap \p{V}$ and $\m{X} = X \cap \m{V}$. 
The endvertices of an edge $e\in E$ in $\p{V}$ and $\m{V}$ are denoted by $\partial^+ e$ and $\partial^- e$, respectively.

We begin by describing the shrinking of a forbidden structure $U \in \U$.  
For concise notation, 
we denote the input graph as $\h{G} = (\h{V}, \h{E})$ 
and 
the graph generated by potential repeated shrinkings 
as $G=(V,E)$. 
Consequently, 
we have $\U \subseteq 2\sp{\hat{V}}$. 
The solution in hand is denoted by $F \subseteq E$. 

Intuitively, 
shrinking of $U$ involves
identifying all vertices in $\p{U}$ and $\m{U}$ to obtain new vertices $\p{u_U}$ and $\m{v_U}$, respectively, 
and 
deleting all edges in $E[U]$. 
In a shrunk graph $G=(V,E)$, 
we refer to a vertex $v \in V$ as a \emph{natural vertex} if $v$ is a vertex in the original graph $\h{G}$, 
and as a \emph{pseudovertex} if it is a newly added vertex when shrinking some $U \in \U$. 
We denote the set of natural vertices as $\vo$, 
and the set of pseudovertices as $\vp$. 
For $X \subseteq \h{V}$, 
define 
$\xo = X \cap \vo$ 
and 
$\xp = \bigcup\{\p{u_U}, \m{v_U} \mid \mbox{$\p{u_U}, \m{v_U} \in \vp$, $U \cap X \neq \emptyset$} \}$.  
For $X \subseteq V$, 
define $\hat{X}\subseteq \hat{V}$ by 
$\hat{X} = \xn \cup \bigcup \{\p{U} \mid \p{u_U}\in  X \cap \vp \} \cup \bigcup \{\m{U} \mid \m{v_U}\in  X \cap \vp \}$.

A formal description of shrinking $U \in \U$ is given as follows. 

\paragraph{Procedure $\mbox{\textsc{Shrink}}(U)$.}
Let $\p{u_U}$ and $\m{v_U}$ be new vertices, 
and  
reset the endvertices of an edge $e \in E \sm E[\un \cup \up]$ 
with 
$\p{\partial}e = u$ and 
$\m{\partial}e = v$ by 
\begin{align*}
{}&{}\p{\partial}e := \p{u_U} \quad \mbox{if $u \in \p{\un} \cup \p{\up}$}, \\
{}&{}\m{\partial}e := \m{v_U} \quad \mbox{if $v \in \m{\un} \cup \m{\up}$}. 
\end{align*}
Then, 
update $G$ by 
\begin{align*}
	&{}\p{V} := (\p{V} \sm (\p{\un} \cup \p{\up})) \cup \{\p{u_U}\}, &
	&{}\m{V} := (\m{V} \sm (\m{\un} \cup \m{\up})) \cup \{\m{v_U}\}, &
	&{}E := E \sm E[U].  
	\end{align*}
Finally, 
$F := F \cap E$  
and 
return $(G,F)$. 

\medskip

Procedure $\mbox{\textsc{Expand}}(G,F)$ is to execute 
the reverse of $\mbox{\textsc{Shrink}}(U)$ for all shrunk $U \in \U$ 
to obtain the original graph $\hat{G}$. 
Here, 
a key point is that $\lfloor (t|U|-1)/2\rfloor$ edges are added to $F$ from $\hat{E}[U]$ for each $U \in \U$.

\paragraph{Procedure $\mbox{\textsc{Expand}}(G,F)$.}
Let $G := \h{G}$. 
For each inclusionwise maximal $U\in \U$ that is shrunk, 
we add 
$F_U \subseteq \hat{E}[U]$ of $\lfloor (t|U|-1)/2\rfloor$ edges 
to $F$ such that $F$ is a \ufeas{} $t$-matching in $\h{G}$. 
Then return $(G,F)$. 

\medskip

In Procedure $\mbox{\textsc{Expand}}(G,F)$, 
the existence of $F_U$  is non-trivial. 
To attain that 
$\hat{F}=F \cup \bigcup\{ F_U \mid \mbox{$U \in \U$ is a maximal shrunk set}\}$ 
is a $t$-matching in $\hat{G}$, 
it should be satisfied for 
$F \subseteq E$ and 
$F_U \subseteq \hat{E}[U]$ 
that 
\begin{align}
\label{EQdegconst}
&\deg_F(u) \le 
\begin{cases} 
	t & (u \in \vo), \\
	1 & (u \in \vp)
\end{cases}\\
\label{EQexpandt}
&\deg_{F_U}(u) 
	\begin{cases}
	=t-1 		& (\mbox{$u$ is incident to an edge in $F[U, V \sm U]$}), \\
	\le t 		& (\mbox{otherwise}). 
	\end{cases}
\end{align}
To achieve this, 
we maintain that 
$F$ satisfies the degree constraint \eqref{EQdegconst}. 
Moreover, 
we assume that, 
for an arbitrary $F$ with \eqref{EQdegconst}, 
there exists $F_U$ 
satisfying $|F_U| = \lfloor (t|U|-1)/2\rfloor$ 
and \eqref{EQexpandt} 
for every maximal shrunk set $U \in \U$. 
This assumption is formally defined as follows.

\begin{definition}
\label{DEFexpansion}
Let $\hat{G}=(\hat{V}, \hat{E})$ be a bipartite graph, 
$\U \subseteq 2\sp{\hat{V}}$, 
and 
$t$ be a positive integer. 
For pairwise disjoint $U_1,\ldots, U_l \in \U$, 
let $G=(V,E)$ denote the graph obtained from $\hat{G}$ by executing $\mbox{\textsc{Shrink}}(U_1)$, \ldots, $\mbox{\textsc{Shrink}}(U_l)$, 
and 
let $F \subseteq E$ be an arbitrary edge set satisfying \eqref{EQdegconst}. 
If there exists $F_{U_i} \subseteq \hat{E}[U_i]$ satisfying $|F_{U_i}| = t|U_i|/2-1$ and \eqref{EQexpandt} for each $i=1,\ldots, l$, 
we say that \emph{$(\hat{G}, \U,t)$ admits expansion}. 
\end{definition}

In what follows, 
we assume that 
$(\hat{G},\U,t)$ admits expansion. 
This is exactly the class of $(\hat{G},\U,t)$ to which our algorithm is applicable. 

This assumption and the degree constraint \eqref{EQdegconst} guarantee that 
we can always obtain a $t$-matching $\hat{F}=F \cup \bigcup\{ F_U \mid \mbox{$U \in \U$ is a maximal shrunk set}\}$ in $\hat{G}$. 
Furthermore, 
we should consider the $\U$-feasibility of $\hat{F}$. 
We refer to $F$ in $G$ as \emph{feasible} if $\hat{F}$ is \ufeas{}.  
If there are several possibilities of $F_U$, 
we say that $F$ is \ufeas{} if there is at least one \ufeas{} $\hat{F}$. 
In other words, 
$F$ satisfying \eqref{EQdegconst} is not feasible 
if,  
for any possibility of $\hat{F}$, 
\begin{align}
|\hat{F}[U']| = \frac{t|U'|}{2}
\label{EQviolate}
\end{align}
holds for some $U' \subseteq \U$, 
and 
$\hat{F}$ will have a $t$-factor in $\hat{G}[U']$. 

See Fig.\ \ref{FIGfeas} for an example. 
Here, $t=2$, and we expand $U=\{u_1,u_2,u_3,u_4,v_1,v_2,v_3,v_4\}$ 
by adding $F_U$ of $\lfloor (t|U|-1)/2\rfloor = 7$ edges satisfying \eqref{EQexpandt}. 
However, 
$\hat{F}_1$ is \emph{not} \ufeas{} because 
it violates \eqref{EQdefinition} for $U'=\{u_3,u_4,u_5,u_6,v_3,v_4,v_5,v_6\}$. 
On the other hand, 
$\hat{F}_2$ 
satisfies \eqref{EQdefinition} for $U'$. 
Thus, 
$F$ in $G$ is called feasible, and 
we select $\hat{F}_2$ when expanding $U$.

\begin{figure}
\centering
\includegraphics[height=.25\textheight]{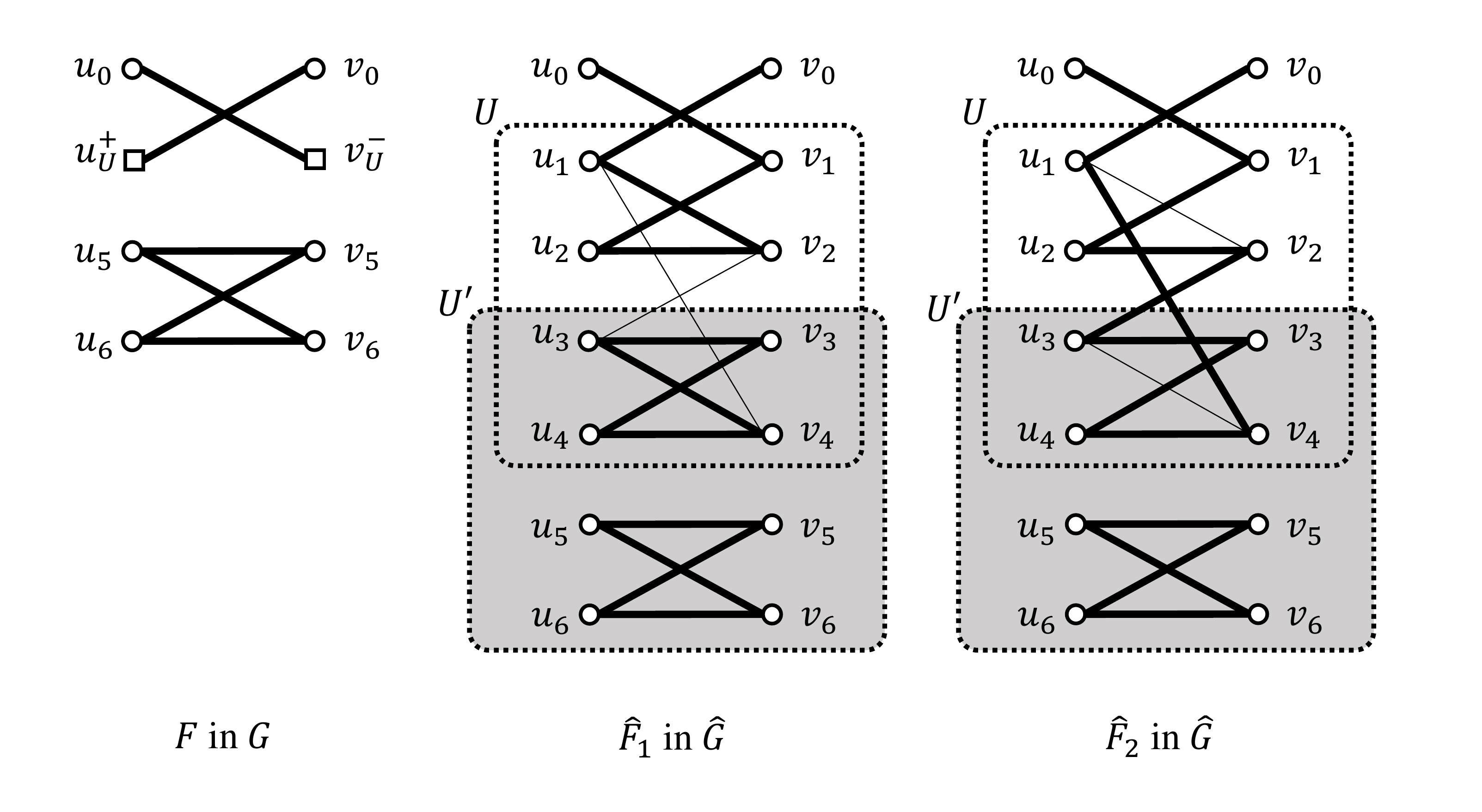}
\caption{In expanding $U\in \U$, $\hat{F}_1$ is inappropriate because it contains a $t$-factor in $\hat{G}[U']$, 
while $\hat{F}_2$ is appropriate.}
\label{FIGfeas}
\end{figure}

Here, 
we describe our algorithm in detail. 
The algorithm begins with $G=\h{G}$ and an arbitrary \ufeas{} $t$-matching $F \subseteq \h{E}$, 
typically $F = \emptyset$. 
We first construct an auxiliary digraph. 

\paragraph{Procedure $\mbox{\textsc{ConstructAuxiliaryDigraph}}(G,F)$.}
Construct a digraph $(V,A)$ 
defined by 
$$A = \{ (u,v) \mid \mbox{$u\in \p{V}$, $v \in \m{V}$, $\{u,v\} \in E \sm F$}   \}
				\cup \{ (v,u) \mid \mbox{$u\in \p{V}$, $v \in \m{V}$, $\{u,v\} \in F$}   \}.$$
Define the sets of source vertices $S \subseteq \p{V}$ and 
sink vertices $T \subseteq \m{V}$ by 
\begin{align*}
&S = 	\{ u \in \p{\vo} \mid \deg_F(u) \le t-1 \} \cup \{ \p{u_U} \in \p{\vp} \mid \deg_F(\p{u_U}) = 0 \},\\
&T = 	\{ v \in \m{\vo} \mid \deg_F(v) \le t-1 \} \cup \{ \m{v_U} \in \m{\vp} \mid \deg_F(\m{v_U}) = 0 \}. 
\end{align*}
Then, return $D=(V,A;S,T)$.

\medskip

Suppose that there exists a directed path $P=(e_1,f_1,\ldots, e_l, f_l, e_{l+1})$ in $D$ from $S$ to $T$. 
Note that 
$e_i \in E \sm F$ ($i=1,\ldots, l+1$) 
and ${f}_i \in F$ ($i=1,\ldots, l$). 
We denote the symmetric difference $(F \sm P) \cup (P \sm F)$ of $F$ and $P$ by $F \sd P$. 
If $F\sd P$ is feasible, 
we execute $\mbox{\textsc{Augment}}(G,F,P)$ below. 
We then execute 
\textsc{Expand}$(G,F)$.

\paragraph{Procedure $\mbox{\textsc{Augment}}(G,F,P)$.}
Let $F:=F\sd P$ and  
return $F$.  

\medskip

If $F\sd P$ is not feasible, 
we apply 
$\mbox{\textsc{Shrink}}(U)$ 
after determining a set $U \in \U$ to be shrunk by the following procedure.

\paragraph{Procedure $\mbox{\textsc{FindViolatingSet}}(G,F,P)$.}
For $i=1,\ldots, l$, 
define 
$F_i = (F\sm\{f_1,\ldots f_i\}) \cup\{e_1,\ldots, e_{i}\}$. 
Also define $F_0 = F$ and $F_{l+1} = F\sd P$. 
Let $i^*$ be the minimum index $i$ such that $F_i$ is not feasible, 
and 
let $U \in \U$ satisfy 
\eqref{EQviolate} for $F = F_{i^*}$. 
Then, 
let 
$F := F_{i^*-1}$, 
and return $(F,U)$. 

\medskip

Finally, 
if $D$ does not have a directed path from $S$ to $T$, 
we determine the minimizer ${X} \subseteq \h{V}$ of \eqref{EQmin} as follows. 

\paragraph{Procedure $\mbox{\textsc{FindMinimizer}}(G,F)$.}
Let $R\subseteq V$ be the set of vertices reachable from $S$, 
and let 
$X := (\p{V} \sm \p{R}) \cup \m{R}$. 
If a natural vertex $v \in \m{V} \sm X$ has $t$ edges in $F$ connecting $\p{R}$ and $v$, 
then 
$X := X \cup \{v\}$. 
If a pseudovertex $\m{v}_U \in \m{V} \sm X$ has one edge in $F$ connecting $\p{R}$ and $\m{v}_U$, 
then 
$X := X \cup \{\m{v}_U\}$. 
Finally, 
return $X:=\hat{X}$. 

\medskip

We then 
apply $\mbox{\textsc{Expand}}(G,F)$ and 
the algorithm terminates by returning $F \subseteq \hat{E}$ and $X \subseteq \hat{V}$. 

Now the description of the algorithm is completed. 
The pseudocode of the algorithm is presented in Algorithm \ref{ALGunweighted}. 
The optimality of $F$ and $X$ is proved 
in Sect.\ \ref{SECminmax}. 
We exhibit how the algorithm works in specific cases such as 
the square-free $2$-matching, even factor,
triangle-free $2$-matching, 
and arborescence problems, 
and discuss the complexity for these cases in Sect.\ \ref{SECex}. 
Before that, 
we analyze the complexity of the algorithm for the general case. 

	\begin{algorithm}[t]
	\caption{Maximum \ufeas{} $t$-matching}
	\label{ALGunweighted}
	\begin{algorithmic}[1]
	\State $G \leftarrow \hat{G}$, $F \leftarrow \emptyset$, $D \leftarrow \mbox{\textsc{AuxiliaryDigraph}}(G,F)$ 
	\While{$D$ has an $S$-$T$ path $P$}
			\If{$F\sd P$ is feasible}
				\State $F \leftarrow \mbox{\textsc{Augment}}(G,F,P)$
				\State $(G,F) \leftarrow \mbox{\textsc{Expand}}(G,F)$
				\State $D \leftarrow \mbox{\textsc{AuxiliaryDigraph}}(G,F)$ 
			\Else
				\State $(F,U) \leftarrow \mbox{\textsc{ViolatingSet}}(G,F,P)$
				\State $(G,F) \leftarrow \mbox{\textsc{Shrink}}(U)$
				\State $D \leftarrow \mbox{\textsc{AuxiliaryDigraph}}(G,F)$ 
			\EndIf
	\EndWhile
	\State $X \leftarrow \mbox{\textsc{Minimizer}}(G,F)$, 
	$(G,F) \leftarrow \mbox{\textsc{Expand}}(G,F)$\\
	\Return $(F,X)$
	\end{algorithmic}
	\end{algorithm}

Here, 
let $n = |\hat{V}|$ and $m=|\hat{E}|$. 
The complexity of the algorithm varies according to the structure of $(G,\U,t)$. 
Recall that 
$\alpha$ denotes the time required to determine the feasibility of $F$, 
and $\beta$ denotes the time requited to expand $U$.  
To be precise, 
$\alpha$ is the time required to check whether $F$ in a shrunk graph $G$ is feasible, 
and, 
if not, 
find $U \in \U$ for which $F$ satisfies \eqref{EQviolate}. 

Between augmentations, 
we execute $\mbox{\textsc{Shrink}}(U)$ $\order{n}$ times. 
For one $\mbox{\textsc{Shrink}}(U)$, 
we check the feasibility of $F_1,\ldots, F_l$, 
which requires $\order{n\alpha}$ time. 
We then reconstruct the auxiliary digraph. 
Here, we should only update the vertices and arcs on the $S$-$T$ path, 
which takes $\order{n}$ time. 
After augmentation, 
we expand the shrunk vertex sets, 
which takes $\order{n\beta}$ time in total. 
Therefore, 
the complexity for one augmentation is $\order{n^2 \alpha + n\beta}$. 
Since augmentation occurs at most $tn/2$ time, 
the total complexity of the algorithm is $\order{t(n^3\alpha + n^2 \beta)}$.

\subsection{Min-max Theorem: Strong Duality}
\label{SECminmax}

In this section, 
we strengthen Lemma \ref{LEMweak} to be a min-max relation 
and 
prove the validity of Algorithm \ref{ALGunweighted}. 
We show that 
the output $(F,X)$ of the algorithm satisfies \eqref{EQweak} with equality. 
This constructively proves the min-max relation 
for the class of $(G,\U,t)$ that admits expansion.

\begin{theorem}
\label{THminmax}
Let $G=(V,E)$ be a bipartite graph, 
$\U \subseteq 2\sp{V}$, 
and $t$ be a positive integer 
such that $(G,\U,t)$ admits expansion. 
Then, 
the maximum size of a \ufeas{} $t$-matching is equal to 
the minimum of 
\begin{align}
\label{EQmin}
t|X| + |E[C_{V \sm {X}}]| + \sum_{U \in \U_{V \sm X}}\left\lfloor \frac{t|U|}{2} - 1 \right\rfloor , 
\end{align}
where $X$ runs over all subsets of $V$. 
\end{theorem}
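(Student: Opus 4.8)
The plan is to establish the min-max relation by proving both inequalities. Weak duality (the $\le$ direction, i.e., that every \ufeas{} $t$-matching has size at most the minimum of \eqref{EQmin}) is essentially Lemma \ref{LEMweak}, once one observes that the floor appearing in \eqref{EQweak} equals the floor in \eqref{EQmin}, since $\lfloor (t|U|-1)/2\rfloor = \lfloor t|U|/2 - 1\rfloor$ when $t|U|$ is even and the expansion assumption forces $|F_U| = t|U|/2 - 1$, i.e.\ $t|U|$ is even on the relevant sets. So the substantive content is strong duality: I would prove it \emph{constructively} by showing that the pair $(F, X)$ returned by Algorithm \ref{ALGunweighted} satisfies \eqref{EQweak} with equality. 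This reduces the theorem to verifying the termination and correctness of the algorithm.

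First I would argue that the algorithm terminates. Each augmentation strictly increases $|F|$, and by the degree constraint \eqref{EQdegconst} the size of any $t$-matching is bounded, so there are at most $tn/2$ augmentations; between augmentations each \textsc{Shrink}$(U)$ strictly decreases $|V|$, bounding the number of shrinkings by $\order{n}$. The key structural claim is that when the while-loop exits because $D=(V,A;S,T)$ has no $S$-$T$ path, the set $X$ produced by \textsc{FindMinimizer}$(G,F)$ certifies optimality. Let $R$ be the set of vertices reachable from $S$ in $D$. Since no sink in $T$ is reachable, every natural vertex of $\m{R}$ and every pseudovertex in $\m{R}$ is $F$-saturated, and every source-side vertex of $\p{V}\sm\p{R}$ is likewise saturated; this is the standard alternating-reachability analysis underlying Edmonds' and Hungarian-type algorithms. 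I would then verify, working in the shrunk graph $G$, that with $X = (\p{V}\sm\p{R})\cup\m{R}$ (adjusted for the saturated vertices added in \textsc{FindMinimizer}) the counting inequalities \eqref{EQsaturated} and \eqref{EQcritical} both hold with equality: \eqref{EQsaturated} is tight because every vertex counted in $X$ is $F$-saturated and no edge of $F$ leaves $X$ on the wrong side, and \eqref{EQcritical} is tight because each component of $G[V\sm X]$ is either a pseudovertex carrying a maximal shrunk $U\in\U$ contributing exactly $\lfloor t|U|/2 - 1\rfloor$ edges after \textsc{Expand}, or a natural vertex contributing its full local edge count.

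The main obstacle I anticipate is the bookkeeping that relates quantities in the \emph{shrunk} graph $G$ to the corresponding quantities in the \emph{original} graph $\h{G}$ through the final \textsc{Expand} call. One must check that setting $X := \h{X}$ (the preimage of $X$ under all shrinkings) converts the tight inequality in $G$ into the tight inequality \eqref{EQweak} in $\h{G}$: each shrunk pseudovertex $\m{v}_U$ lying in $X$ contributes $t$ to $t|X|$ in $\h{G}$ exactly when it was saturated in $G$, while a shrunk $U$ lying in $V\sm X$ reappears as a component $U\in\U_{V\sm X}$ of $G[V\sm X]$ contributing $\lfloor t|U|/2-1\rfloor$. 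Establishing that \textsc{Expand} can indeed restore $\lfloor (t|U|-1)/2\rfloor$ edges inside each maximal shrunk set while preserving both feasibility and the degree pattern \eqref{EQexpandt} is precisely where the \emph{admits expansion} hypothesis (Definition \ref{DEFexpansion}) is used, and I would invoke it here rather than re-deriving it. The remaining correctness point — that \textsc{FindViolatingSet} always returns a genuine $U\in\U$ to shrink whenever $F\sd P$ is infeasible, so that the algorithm never stalls with an infeasible augmentation — follows by taking the minimal prefix index $i^*$ along $P$ at which feasibility first breaks and extracting the violating $U$ from \eqref{EQviolate}; I would confirm that shrinking this $U$ preserves the invariant \eqref{EQdegconst} and does not destroy any previously feasible structure, closing the induction.
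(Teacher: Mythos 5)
Your proposal is correct and takes essentially the same route as the paper: the paper's proof is exactly the constructive argument you outline, taking the output $(F,X)$ of Algorithm \ref{ALGunweighted} and verifying that \eqref{EQsaturated} and \eqref{EQcritical} hold with equality (together with $F[X]=\emptyset$, which your ``no edge of $F$ on the wrong side'' condition supplies), with weak duality given by Lemma \ref{LEMweak}. The extra points you raise --- termination, the agreement of the two floor expressions under the expansion assumption, and the bookkeeping through \textsc{Expand} --- are details the paper leaves implicit, but they elaborate the same argument rather than constitute a different one.
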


\begin{proof}
We denote the output of Algorithm \ref{ALGunweighted} by $(\hat{F},\hat{X}) $. 
Here, 
it is sufficient to 
prove that \eqref{EQsaturated} and \eqref{EQcritical} hold by equality 
for $(\hat{F},\hat{X})$. 
Since $X$ is defined based on reachability in the auxiliary digraph $D$, 
it is straightforward that $\hat{F}[\hat{X}] = \emptyset$. 
Moreover, 
for every $v \in \hat{X}$, 
$\deg_{\hat{F}}(v) = t$ holds; 
thus, \eqref{EQsaturated} holds by equality. 
Finally, 
edges in 
$\h{G}[\h{V}\sm \hat{X}]$ are in $F$ before the last \textsc{Expand}$(G,F)$ or 
are obtained by expanding pseudovertices $\p{u_U}$ and $\m{v_U}$, 
which are isolated vertices in $G[V \sm {X}]$. 
This means that $U$ forms a component in $\h{G}[{\hat{X}}]$; 
thus, the equality in \eqref{EQcritical} holds. 
\end{proof}

\subsection{Applying our Algorithm to Special Cases}
\label{SECex}

Here, we demonstrate how Algorithm \ref{ALGunweighted} is applied to the special cases of 
\c{k}-free $2$-matchings, 
even factors (including nonbipartite matchings), 
triangle-free $2$-matchings, 
and 
arborescences. 
Differences appear in 
determining feasibility in the shrunk graph and 
the edges to be added by expansion. 

\subsubsection{$C_{\le k}$-free 2-matchings in Bipartite Graphs}
The case where $k=4$, 
i.e., square-free $2$-matchings in a simple bipartite graph, 
is the most straightforward example. 
In this case, 
the family of the shrunk vertex sets never becomes nested, 
i.e.,\ 
\textsc{Shrink}($U$) is always applied to a cycle of length four comprising four natural vertices. 
Thus, 
an edge set $F \subseteq E$ is feasible if and only if $F$ excludes a cycle of length four, 
even if the graph is obtained by repeated shrinking. 
Furthermore, 
the feasibility of each $F_i$ ($i=1,\ldots, l$) can be checked in constant time 
because it is sufficient to determine whether the new edge $e_i$ added to $F_i$ is in a square. 

When expanding $U \in \U$, 
it suffices to 
choose $F_U$ consisting of three edges in $E[U]$ and 
satisfying \eqref{EQexpandt} for $t=2$. 
This always yields the $\U$-feasibility of $\hat{F}$ 
and can be performed in constant time for one square. 

For the case where $k \ge 6$, 
the problem becomes more involved. 
Suppose that 
$\U=\{U \subseteq \hat{V} \mid 1 \le |U| \le 6\}$ and 
we expand $U \in \U$ with $|U|=6$. 
We then select $F_U\subseteq E[U]$ with $|F_U| = 5$  
according to \eqref{EQexpandt}; 
however, such $F$ might not exist. 
Moreover, 
even if such $F_U$ is found, 
$F_U$ might contain a cycle of length four, 
which violates $\U$-feasibility. 

Such difficulty is inevitable because 
the simple \c{k}-free $2$-matching problem in bipartite graphs is NP-hard when $k \ge 6$. 
Thus, 
we require an assumption for our algorithm to work. 
One solution to this difficulty is to impose the connectivity of $F_U$, 
i.e.,\ 
when expanding $U \in \U$, 
we require that 
there always exists 
$F_U$ satisfying \eqref{EQexpandt} and that 
$(U, F_U)$ is connected. 
If $t = 2$, 
this property amounts to the Hamilton-laceability of $G[U]$ (see \cite{Tak17}). 

It is clear that $\ktt$-free $t$-matchings in bipartite graphs \cite{Fra03} 
also satisfy this assumption. 
Under this assumption, 
$F \subseteq E$ satisfying \eqref{EQdegconst} is feasible if and only if 
$F$ does not contain a $K_{t,t}$ of natural vertices as a subgraph. 

\subsubsection{Matchings and Even Factors in Nonbipartite Graphs}
\label{SECappef}

Since the nonbipartite matching problem is reduced to the even factor problem, 
it suffices to discuss only the even factor problem. 
Here, 
let $D=(V,A)$ be an \ocs{} digraph and define $\h{G}=(\hat{V},\hat{E})$ and $\U$ by \eqref{EQef}. 
Recall that, 
if $D$ is \ocs{}, 
then 
$\hat{G}[U]$ is a symmetric bipartite graph for each $U \in \U$. 
In this case, 
our algorithm is performed recursively, 
i.e.,\ 
a $1$-matching $F \subseteq E$ in a shrunk graph $G$ is feasible 
if $|F[U']| \le |U'|/2 - 1$ for $U' = \rmn{U} \cup \rmp{U}$ with $U \in \U$. 
This can be checked in $\order{n}$ time, 
and 
Procedure \textsc{Shrink}($U'$) is executed when a perfect matching in $G[U']$ is found in our solution. 
See Fig.\ \ref{FIGefShrink} for an illustration. 

\begin{figure}
\centering
\includegraphics[height=.25\textheight]{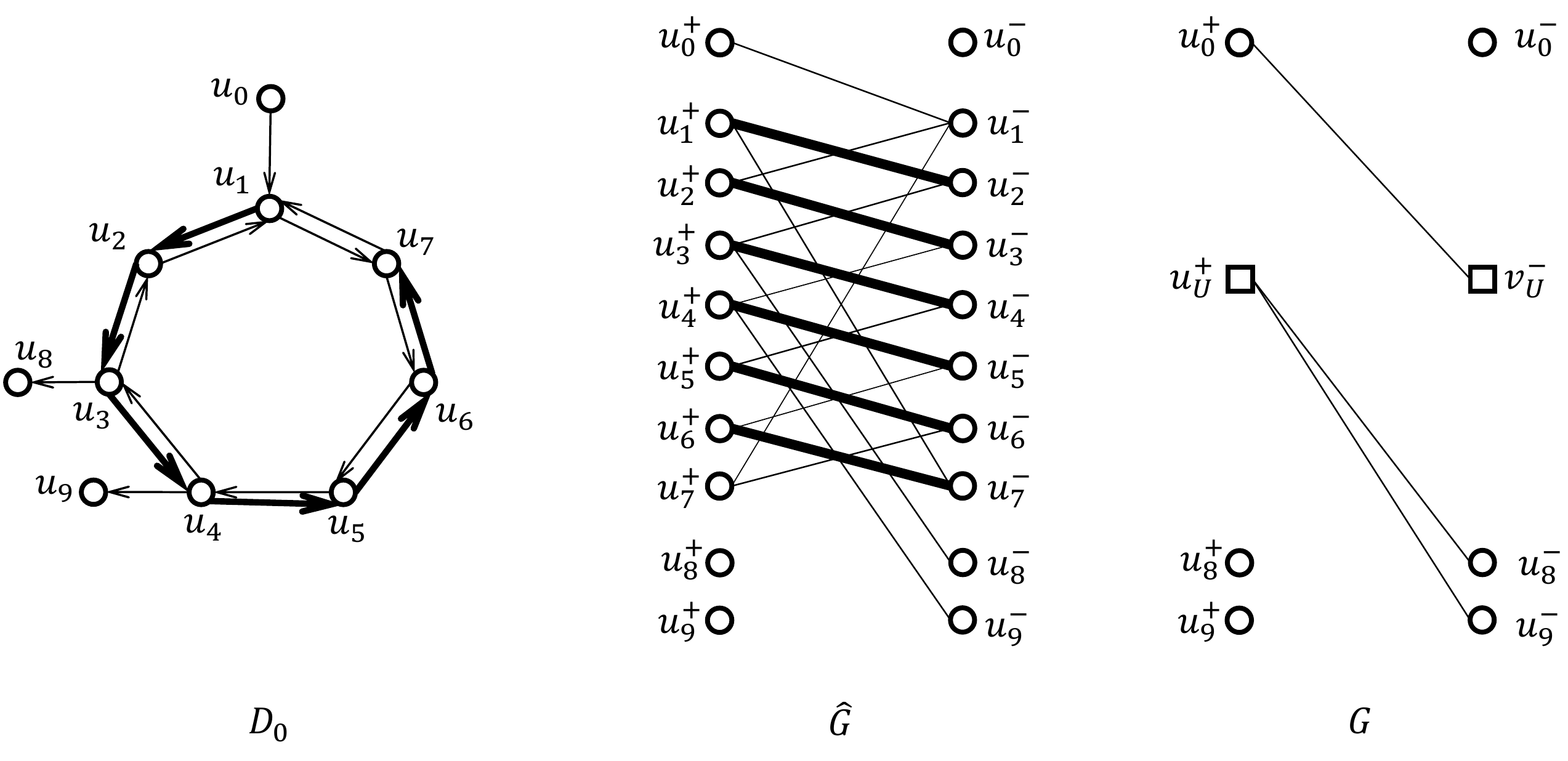}
\caption{The maximum even factor problem in $D_0$ is reduced to the \ufeas{} $1$-matching problem in $\hat{G}$. 
If we find arc $(u_7^+, u_1^-)$ as an $S$-$T$ path in the auxiliary digraph, 
we shrink $U=\{u_1^+,\ldots, u_7^+, u_1^-,\ldots, u_7^-\}$ to obtain $G$.}
\label{FIGefShrink}
\end{figure}

In \textsc{Expand}($G,F$), 
we repeat expanding a maximal shrunk vertex set $U$, 
where the proper shrunk subsets of $U$ remain shrunk. 
We repeat this step until the original graph $\hat{G}$ is reconstructed. 
See Fig.\ \ref{FIGefExpand1} for an illustration of expanding $U$. 
Without loss of generality, 
we can denote the perfect matching in $G[U']$ by $\bigcup_{i=1}^{k}\{\p{u_i},\m{u_{i+1}}\}$, 
where $k=2k'+1$ is odd and $u_{k+1} = u_1$. 
Furthermore, 
assume that $\m{u_1}$ and $\p{u_j}$ are incident to an edge in $F[U', V \sm U']$. 
Now, 
if $j = 2j'+1$ is odd, 
let $F_{U'} = \bigcup_{i=1}^{j-1}\{\p{u_{i}}, \m{u_{i+1}}\} \cup \bigcup_{i=j'+1}^{k'}\{ \{\p{u_{2i}}, \m{u_{2i+1}}\}, \{\p{u_{2i+1}}, \m{u_{2i}}\} \}$. 
If $j=2j'$  is even, 
then let 
$F_{U'} = \bigcup_{i=1}^{j'-1}\{ \{\p{u_{2i}}, \m{u_{2i+1}}\}, \{\p{u_{2i+1}}, \m{u_{2i}}\}\} \cup \bigcup_{i=j}^{k}\{\p{u_{i+1}}, \m{u_{i}} \}$. 
It is straightforward that \textsc{Expand}($G,F$) can be performed in $\order{n}$ time. 

Note that this procedure is possible because $k$ is odd and $G[U']$ is symmetric. 
We also remark that this procedure corresponds to expanding an odd cycle in an even factor algorithm \cite{Pap07}, 
and expanding an odd cycle in Edmonds' blossom algorithm \cite{Edm65}. 

\begin{figure}
\centering
\includegraphics[height=.25\textheight]{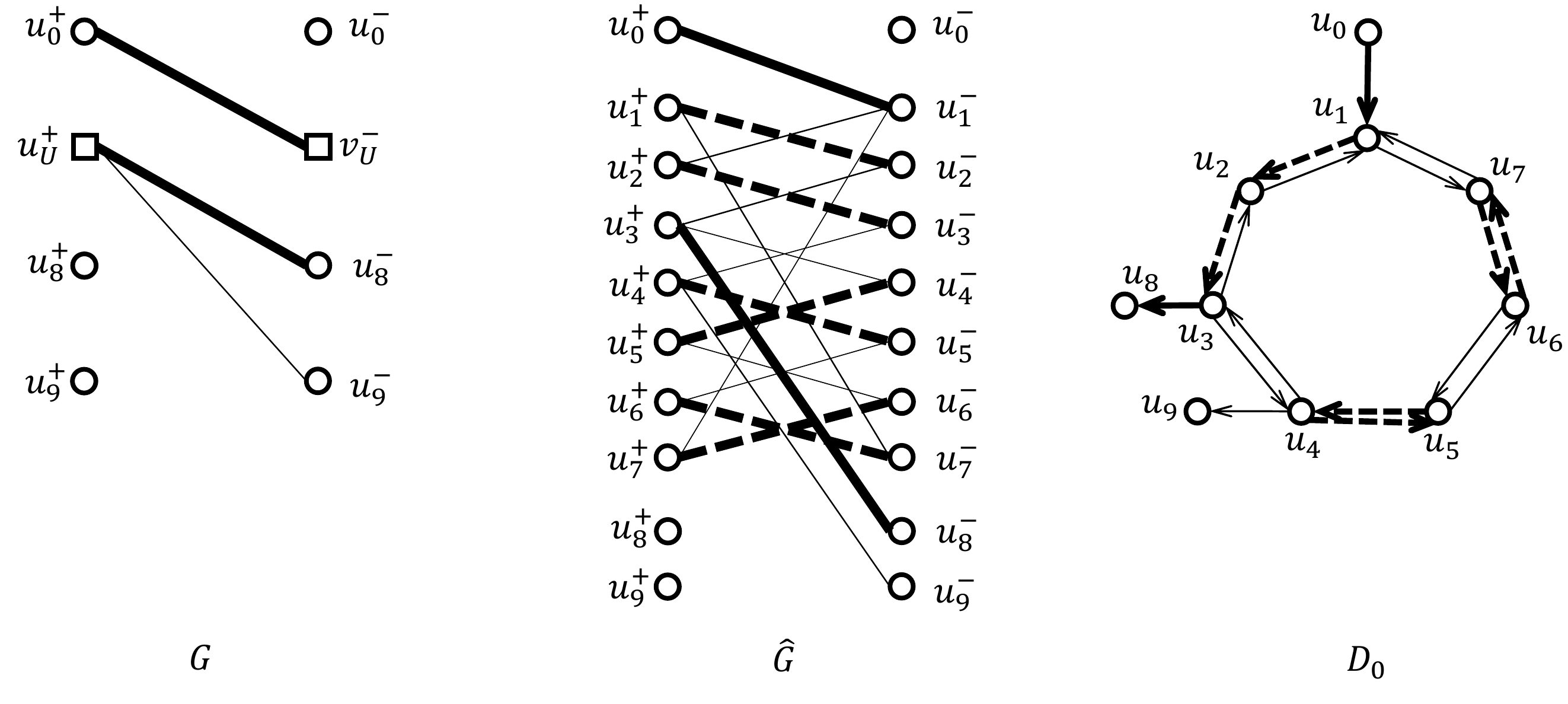}

\medskip

\includegraphics[height=.25\textheight]{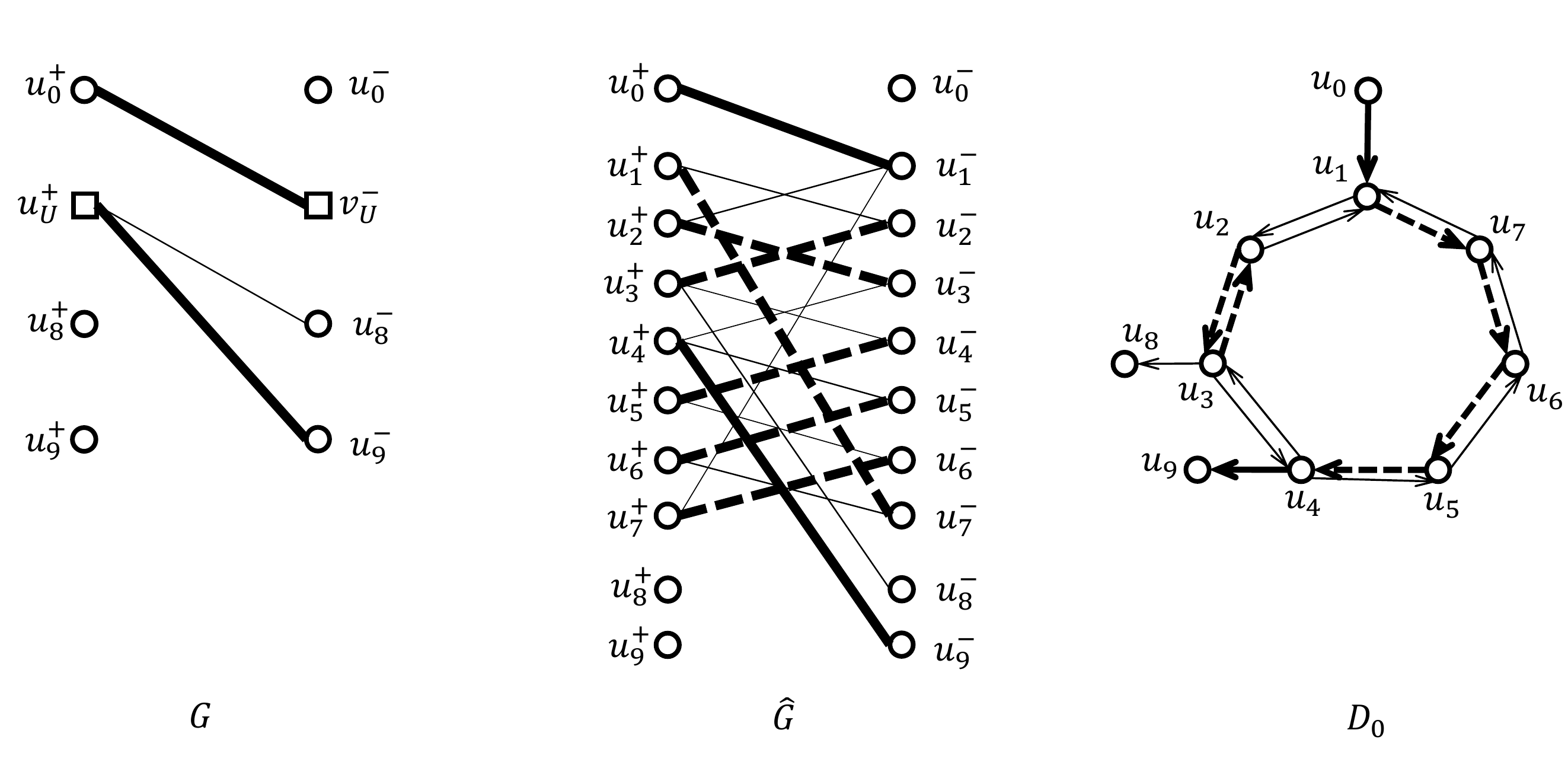}
\caption{Two types of expanding 
$U$.
The set of thick edges in $\hat{G}$ is our \ufeas{} $1$-matching, 
where the dashed edges are those added when expanding $U$. 
This \ufeas{} $1$-matching corresponds to the even factor of thick arcs in $D_0$.}
\label{FIGefExpand1}
\end{figure}

\subsubsection{Triangle-free 2-matchings}
Recall the instance of the \ufeas{} $t$-matching problem constructed in Sect.\ \ref{SECtri}. 
Here, we denote a graph obtained from $\hat{G}$ by repeated shrinkings by $G$. 
In $G$, 
a pair of vertices $u \in \p{V}$ and $v \in \m{V}$ is referred to as \emph{twins} if 
they are copies of the same original vertex or 
they are pseudovertices added by the same shrinking procedure. 

Here, 
a $1$-matching $F \subseteq E$ is infeasible only if 
it contains a matching of three edges covering three pairs of twins in the original graph $\hat{G}$. 
In other words, 
even if $F$ contains a matching of three edges covering three pairs of twins in $G$, 
it is feasible if the endvertices of those edges are not three pairs of twins in $\hat{G}$. 

For example, 
recall the instance in Fig.\ \ref{FIGtriangle}. 
In Fig.\ \ref{FIGtriAug}, 
$G$ is obtained from $\hat{G}$ by shrinking $U = \{u_3^+,u_4^+,u_5^+, u_3^-,u_4^-,u_5^-\}$, 
and we have a feasible edge set $\{\{ u_1^+, u_2^- \}, \{ u_2^+, v_U^- \}\}$. 
If we find an $S$-$T$ path $P_1$ consisting of a single arc resulting from $(u_5^+,u_1^-)$, 
then $F \sd P_1$ is feasible 
and $\mbox{\textsc{Augment}}(G,F,P)$ and $\mbox{\textsc{Expand}}(G,F)$ follow. 

In contrast, 
in Fig.\ \ref{FIGtriShrink}, 
suppose that we find an $S$-$T$ path $P_2$ consisting of a single arc $(u_3^+,u_1^-)$. 
Then, 
$F \sd P_2$ is \emph{not} feasible 
and $\mbox{\textsc{Shrink}}(W)$ follows, 
where 
$W = \{ u_1^+,u_2^+,u_3^+, u_1^-,u_2^-,u_3^- \}$. 
Note that 
the family of vertex sets shrunk by our algorithm corresponds to a \emph{triangle cluster} 
in the triangle-free $2$-matching algorithm due to Cornu\'ejols and Pulleyblank \cite{CP80}. 

\begin{figure}
\centering
\includegraphics[height=.18\textheight]{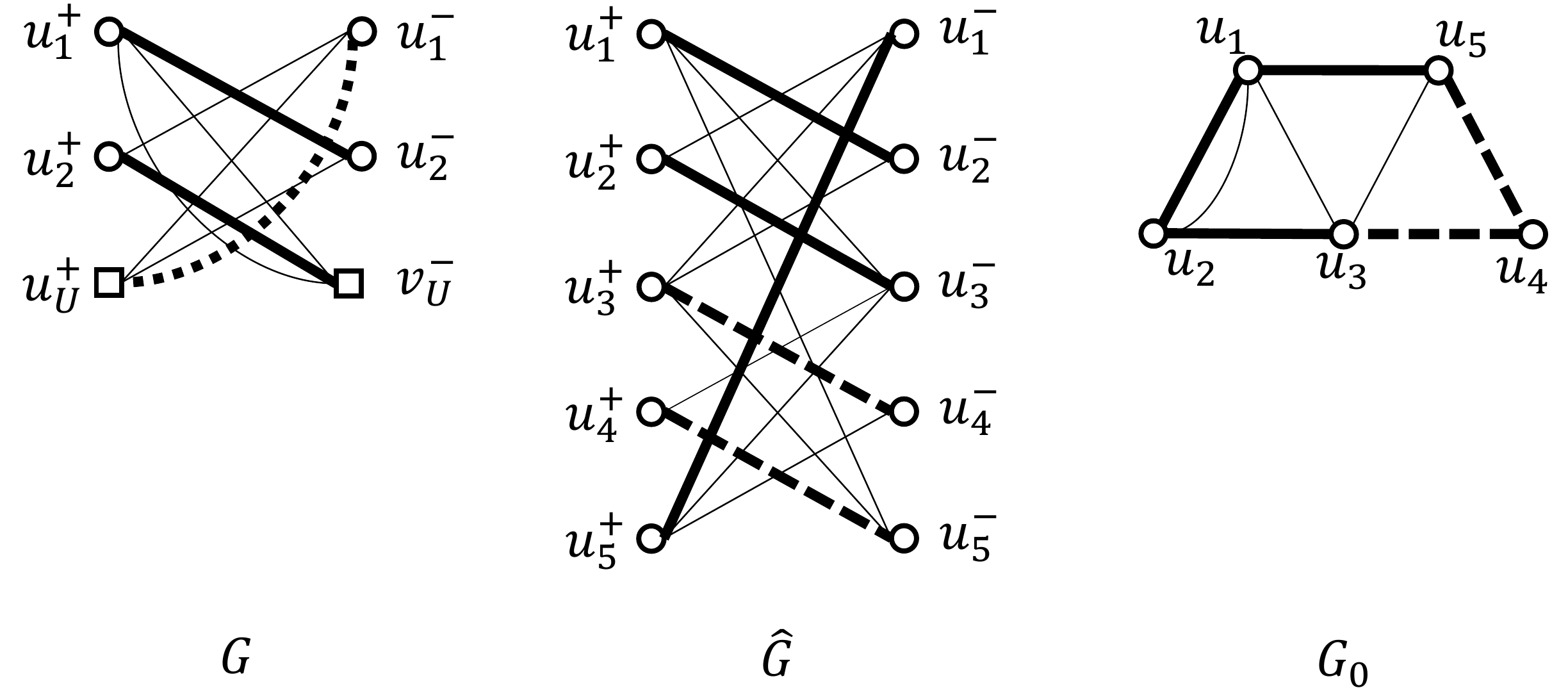}
\caption{If we find an $S$-$T$ path of a single arc resulting from $(u_5^+,u_1^-)$ (dotted edge in $G$), 
we execute $\mbox{\textsc{Augment}}(G,F,P)$ and $\mbox{\textsc{Expand}}(G,F)$. 
The set of thick edges in $\hat{G}$ is the obtained \ufeas{} $1$-matching, 
where the dashed edges are those added by $\mbox{\textsc{Expand}}(G,F)$. 
This \ufeas{} $1$-matching corresponds to a triangle-free $2$-matching 
(indicated by the thick edges) 
in $G_0$.}
\label{FIGtriAug}
\end{figure}

\begin{figure}
\centering
\includegraphics[width=.8\textwidth]{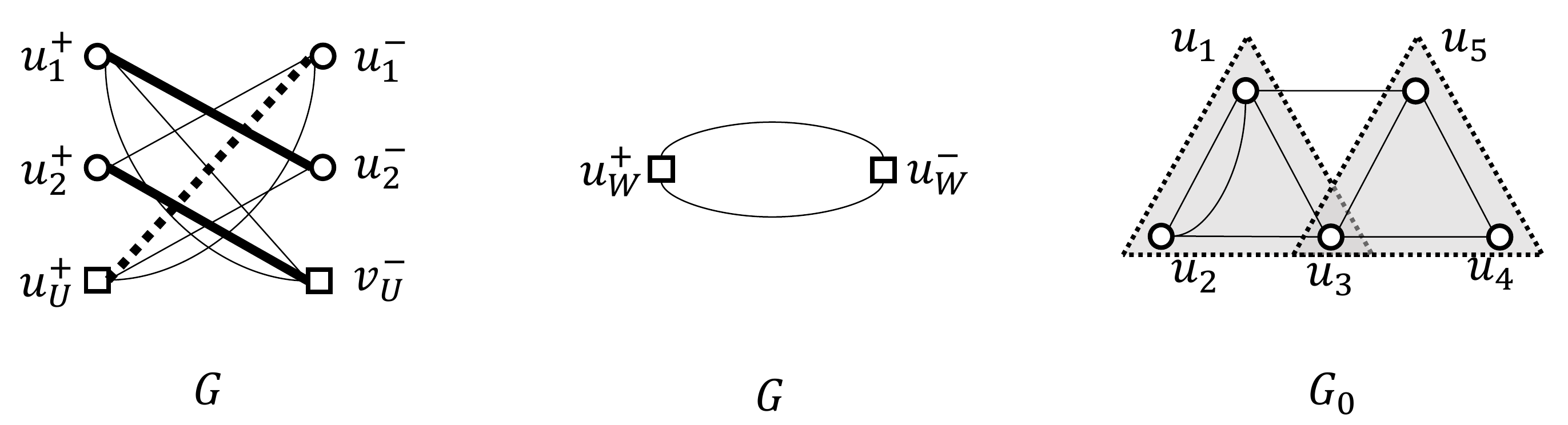}
\caption{If we find an $S$-$T$ path of a single arc resulting from $(u_3^+,u_1^-)$ (dotted edge in $G$), 
we execute $\mbox{\textsc{Shrink}}(W)$, 
where $W = \{ u_1^+,u_2^+,u_3^+, u_1^-,u_2^-,u_3^- \}$. 
The parallel edges between $\p{u_{W}}$ and $\m{u_{W}}$ result from $\{\p{u_1},\m{u_5}\}$ and $\{\p{u_5},\m{u_1}\}$. 
The shrunk triangles $\{u_1,u_2,u_3\}$ and $\{u_3,u_4,u_5\}$ in $G_0$ form a triangle cluster \cite{CP80}.}
\label{FIGtriShrink}
\end{figure}

For \textsc{Expand}($G,F$), 
we expand each shrunk $U \in \U$ one by one. 
We add two edges from $\hat{E}[U]$ to $F$ such that $F$ remains a $1$-matching, 
which always obtains the $\U$-feasibility of the output $F$. 

It is clear that the feasibility of $F$ can be determined in $\order{n}$ time, 
and 
that of $F_i$ in $\mbox{\textsc{FindViolatingSet}}(G,F,P)$ can be determined in a constant time for each $i=1,\ldots , l$. 
In addition,
\textsc{Expand}($G,F$) needs $\order{n}$ time.

\subsubsection{Matroids and Arborescences}

Recall the instances constructed in Sect.\ \ref{SECarb}. 
If Algorithm \ref{ALGunweighted} is applied to finding a maximum independent set of a matroid, 
then 
an $S$-$T$ path always consists of one arc, 
i.e.,\ 
the solution is greedily augmented. 
However, 
Procedure $\mbox{\textsc{Shrink}}(U)$ may occur, 
which is simply the contraction of a circuit for matroids. 

If Algorithm \ref{ALGunweighted} is applied to finding an arborescence, 
then 
an $S$-$T$ path always consists of one arc 
and the solution is greedily augmented. 
Here, 
Procedure $\mbox{\textsc{Shrink}}(U)$ corresponds to shrinking a directed cycle.

\subsection{$C_{4k+2}$-free $2$-matchings: New Example in Our Framework}

So far, 
we have viewed that some problems in the literature are described as the \ufeas{} $t$-matching problem in bipartite graphs under the assumption that $(G,\U,t)$ admits expansion. 
Here we exhibit a new problem which falls in this framework. 

Let $G=(V,E)$ be a simple bipartite graph. 
A $2$-matching $F \subseteq E$ is called \emph{$C_{4k+2}$-free} if $F$ excludes cycles of length $4k+2$ for every positive integer $k$. 
In other words, 
the length of a cycle in $F$ must be a multiple of four. 

Now $C_{4k+2}$-free $2$-matchings are described as \ufeas{} $2$-matchings, where 
$$\U = \{U \subseteq V \mid \mbox{$|\p{U}|=|\m{U}|=2k+1$ for some positive integer $k$}\}.$$
A solvable class of this form of the \ufeas{} $2$-matching problem is obtained by recalling the instances for even factors: 
a class where $G[U]$ is a symmetric bipartite graph 
and all of the twins in $G[U]$ are connected by an edge for every $U \in \U$. 
To be precise, 
every $U \in\U$ is described as 
$\p{U}=\{u_1,\ldots, u_{2k+1}\}$ and $\m{U}= \{v_1,\ldots, v_{2k+1}\}$, 
where 
$\{u_i, v_j\} \in E$ if and only if $\{u_j,v_i\} \in E$, 
and 
$\{u_i, v_i\} \in E$ for each $i=1,\ldots, 2k+1$. 
Then, 
it is straightforward to see that 
$(G,\U,t)$ admits expansion by following the arguments for even factors in Sect.\ \ref{SECbief} and \ref{SECappef}.

\section{Weighted \ufeas{} $t$-matching}
\label{SECw}

In this section, 
we extend the min-max theorem and the algorithm presented in Sect.\ \ref{SECunweighted} to the maximum-weight \ufeas{} $t$-matching problem. 
Recall that $G$ is a simple bipartite graph 
and $(G,\U,t)$ admits expansion. 
We further assume that 
$w$ is vertex-induced on each $U \in \U$, 
which commonly extends the assumptions for the maximum-weight square-free and even factor problems. 

\subsection{Linear Program}

Here, 
we describe a linear programming relaxation of the 
the maximum-weight \ufeas{} $t$-matching problem 
in variable $x \in \RR\sp{E}$: 
\begin{alignat}{3}
\mbox{(P)}\quad 	&{}\mbox{maximize} \quad 	{}&&{}	\sum_{e \in E}w(e) x(e) 							{}&& \\
					&{}\mbox{subject to} \quad 	{}&&{}	x(\delta(v)) \le t \quad							{}&&{}(v \in V), \\
					\label{EQufeas}
					&{}							{}&&{}	x(E[U]) \le \left\lfloor \frac{t|U|-1}{2} \right\rfloor \quad 	{}&&{}(U \in \U), \\
					&{}							{}&&{}	0 \le x(e) \le 1  \quad 							{}&&{}(e \in E). 
\end{alignat}
Note that 
Constraint \eqref{EQufeas}, which describes $\U$-feasibility, 
is a common extension of the blossom constraint for 
the nonbipartite matching problem ($t=1$), 
and 
the subtour elimination constraints for the TSP ($t=2$). 

Its dual program 
in variables 
$p\in \RR\sp{V}$,
$q\in \RR\sp{E}$, 
and 
$r \in \RR\sp{\U}$ 
is given as follows: 
\begin{alignat}{3}
\mbox{(D)}\quad 	&{}\mbox{minimize} \quad 	{}&&{}	t\sum_{v \in V}p(v) + \sum_{e \in E}q(e) + \sum_{U\in \U}\left\lfloor \frac{t|U|-1}{2} \right\rfloor r(U)  	{}&& \\
					&{}\mbox{subject to} \quad 	{}&&{}	p(u) + p(v) + q(e) + \sum_{U \in \U\colon e \in E[U]}r(U) \ge w(e)\quad 						{}&&{}(e=\{u,v\} \in E), \\
					&{}							{}&&{}	p(v) \ge 0 \quad 																				{}&&{}(v \in V), \\
					&{}							{}&&{}	q(e) \ge 0 \quad 																				{}&&{}(e \in E), \\
					&{}							{}&&{}	r(U) \ge 0 \quad 																				{}&&{}(U \in \U). 
\end{alignat}
We define $w'\in \RR\sp{E}$ by 
	\begin{align*}
	w'(e)
	= 
	p(u) + p(v)+ q(e) + \sum_{U \in \U\colon e \in E[U]}r(U) - w(e) \quad(e=\{u,v\} \in E). 
	\end{align*}
The complementary slackness conditions for (P) and (D) are as follows. 
\begin{alignat}{2}
\label{EQcsx}
&{}x(e) > 0 \Longrightarrow w'(e)=0 						\quad{}&&{} 	(e \in E), \\
\label{EQcsp}
&{}p(v) > 0 \Longrightarrow x(\delta (v)) = 2 				\quad{}&&{} 	(v \in V), \\
\label{EQcsq}
&{}q(e) > 0 \Longrightarrow x(e) = 1 						\quad{}&&{} 	(e \in E), \\
\label{EQcsr}
&{}r(U) > 0 \Longrightarrow x(E[U]) = \left\lfloor \frac{t|U|-1}{2} \right\rfloor 	\quad{}&&{} 	(U \in \U). 
\end{alignat}

\subsection{Primal-Dual Algorithm}

In this section, 
we demonstrate a combinatorial primal-dual algorithm for the maximum-weight \ufeas{} $t$-matching problem 
in bipartite graphs, 
where $(G,\U,t)$ admits expansion and 
$w$ is vertex-induced for each $U \in \U$. 

We maintain primal and dual feasible solutions 
that satisfy \eqref{EQcsx}, \eqref{EQcsq}, \eqref{EQcsr}, 
and \eqref{EQcsp} for $v \in \m{V}$. 
The algorithm terminates when \eqref{EQcsp} is obtained for every $v \in \p{V}$. 
Again, we denote the input graph by $\hat{G} = (\h{V}, \h{E})$, 
and 
the graph in hand, 
i.e., the graph resulting from possibly repeated shrinkings, 
by $G=(V,E)$. 
The variables in the algorithm are 
$F \subseteq E$, 
$p \in \RR\sp{\h{V}}$, 
$q \in \RR\sp{\h{E}}$, 
and 
$r \in \RR\sp{\U}$. 
Note that $p$ and $q$ are always defined on the original vertex and edge sets, 
respectively. 

Initially, 
we set
\begin{align}
&F = \emptyset, &
&p(v) = 
\begin{cases}
\max\{w(e) \mid e \in \delta(v)\} 	& (v \in \p{V}), \\
0 									&(v \in \m{V}), 
\end{cases}\notag\\
& q(e) = 0 \quad (e \in E), &
& r(U) = 0 \quad (U \in \U).
\label{EQinitial}
\end{align}
The auxiliary digraph $D$ is constructed as follows. 
Here, 
the major differences from Sect.\ \ref{SECalg} are that 
we only use an edge $e$ with $w'(e)=0$, 
and a vertex in $\p{V}$ can become a sink vertex. 

\paragraph{Procedure \textsc{ConstructAuxiliaryDigraph}$(G,F,p,q,r)$.}
Here, 
we define a digraph $(V,A)$ by
$$A = 	\{ (\partial^+e,\partial^-e) \mid \mbox{$e\in E \sm F$, $w'(e)=0$}   \} 
		\cup \{ (\partial^-e,\partial^+e) \mid e=\{u,v\} \in F   \}.$$
The sets of source vertices $S \subseteq \p{V}$ and 
sink vertices $T \subseteq \p{V} \cup \m{V}$ are
defined by 
\begin{alignat*}{2}
&S = {}&&{}	\{ u \in \p{\vn} \mid \mbox{$\deg_F(v) \le t-1$, $p(u) > 0$} \}, \\
		&&&{}	\cup \{ \p{u_U} \in \p{\vp} \mid \mbox{$\deg_F(\p{u_U}) = 0$, $p(u) > 0$ for some $u \in U$} \}\\
&T = {}&&{}	\{ v \in \m{\vn} \mid \deg_F(v) \le t-1 \} \cup \{ \m{v_U} \in \m{\vp} \mid \deg_F(\m{v_U}) = 0 \}\\
		&&&{}	\cup \{ u \in \p{\vn} \mid \mbox{$\deg_F(u) = t$, $p(u) = 0$} \} \\
		&&&{}	\cup \{ \p{u_U} \in \p{\vp} \mid \mbox{$\deg_F(u_U^+) = 1$, $p(u) = 0$ for some $u \in U$} \}.
\end{alignat*}
Return $D=(V,A;S,T)$,

\medskip 

Suppose that $D$ has a directed path $P$ from $S$ to $T$, 
and let $F' := F \sd P$. 
If $F'$ is feasible, 
we execute $\mbox{\textsc{Augment}}(G,F,P)$, 
which is the same as in Sect.\ \ref{SECalg}. 
Note that, 
if $P$ ends in a vertex in $T \cap \p{V}$, 
then $|F|$ does not increase. 
However, 
in this case, 
the number of vertices satisfying \eqref{EQcsp} increases by one, 
and we get closer to the termination condition (achieving \eqref{EQcsp} at every vertex). 
If $F'$ is not feasible, 
we apply \textsc{ViolatingSet}($G,F,P$) as in Sect.\ \ref{SECalg}. 
For the output $U$ of \textsc{ViolatingSet}($G,F,P$), 
if $p(u) = 0$ holds for some $u \in \p{U}$, 
then we execute 
$\mbox{\textsc{Modify}}(G,F,U)$ below.  
Otherwise, 
we apply 
$\mbox{\textsc{Shrink}}(U)$ as in Sect.\ \ref{SECalg}. 

\paragraph{Procedure $\mbox{\textsc{Modify}}(G,F,U)$.}
Let $u^* \in \p{U}$ satisfy $p(u^*) = 0$. 
Then find $K \subseteq E[U]$ such that 
\begin{align*}
\deg_{K}(u)= 
\begin{cases}
	t 				& (u \in \p{\uo} \sm \{u\sp{*}\}), \\
	t-1 			& (u = u^*), \\
	0 				& (u = \p{u_{U'}} \in \p{\up}, u^* \in U'), \\
	\deg_{F[U]}(u) 	& (u \in \m{\un} \cup \m{\up}). 
\end{cases}
\end{align*}
Here, return $F:= (F \sm F[U]) \cup K$. 

\medskip

If $D$ does not have a directed path from $S$ to $T$, 
then 
update the dual variables $p$, $q$, and $r$ 
by procedure $\mbox{\textsc{UpdateDualSolution}}(G,F,p,q,r)$ described below. 
\paragraph{Procedure $\mbox{\textsc{UpdateDualSolution}}(G,F,p,q,r)$.}
Let $R \subseteq V$ be the set of vertices reachable from $S$ in the auxiliary digraph $D$. 
Then,
\begin{align*}
&{}p(v) := 
		\begin{cases}
			p(v) - \epsilon 		& (v \in \p{\hat{R}}), \\
			p(v) + \epsilon 		& (v \in \m{\hat{R}}), \\
			p(v)					& (v \in \hat{V} \sm \hat{R}),
		\end{cases} \\
&{}q(e) := 
		\begin{cases}
			q(e) + \epsilon 	&(\mbox{$\partial^+e \in \p{\hat{R}}$, $\partial^-e \in \m{\hat{V}}\sm\m{\hat{R}}$}), \\
			q(e)				& (v \in \m{\hat{V}} \sm \m{\hat{R}}), 
		\end{cases} \\
&{}r(U) := 
		\begin{cases}
			r(U) + \epsilon 	& (\p{u_U} \in \p{R}, \m{v_U} \in \m{V} \sm \m{R}), \\
			r(U) - \epsilon 	& (\p{u_U} \in \p{V} \sm \p{R}, \m{v_U} \in \p{R}), \\
			r(U) 			 	& (\mbox{otherwise}), 
		\end{cases}
\end{align*}
where 
\begin{align*}
{}&{}\epsilon   = \min\{\epsilon_1,\epsilon_2,\epsilon_3\}, \quad
\epsilon_1 = \min\{w'(\{u,v\}) \mid u\in \p{\hat{R}}, v \in \m{\hat{V}} \sm \m{\hat{R}}\}, 
\\
{}&{}
\epsilon_2 = \min\{p(u) \mid u \in \hat{\p{R}}\}, 
\quad
\epsilon_3 = \min\{r(U) \mid \p{u_U} \in \p{\hat{V}} \sm \p{\hat{R}}, \m{v_U} \in \m{\hat{R}}   \}. 
\end{align*}
Then 
return $(p,q,r)$. 

\medskip

Finally, 
we expand every $U$ satisfying $r(U)=0$ after 
\textsc{Augment}$(G,F,P)$, 
\textsc{Modify}$(G,F,U)$, 
and 
\textsc{UpdateDualSolution}$(G,F,p,q,r)$. 
If any $U' \subsetneq U$ satisfies $r_{U'}>0$, 
which implies that $U'$ had been shrunk before $U$ was shrunk, 
then $U'$ is maintained as shrunk.  
\paragraph{Procedure $\mbox{\textsc{Expand}}(G,F,r)$.}
For each shrunk $U \in \U$ with $r(U)=0$, 
execute the following procedures. 
Update $G$ by replacing $\p{u_{U}}$ and $\m{v_U}$ 
by the graph induced by $\un \cup \up$ just before \textsc{Shrink}($U$) is applied. 
Determine $F_U \subseteq E[\un \cup \up]$ 
of $\lfloor (t|\un| + |\up|-1)/2\rfloor - 1$ edges 
such that $F' = F \cup F_U$ can be extended to a \ufeas{} $t$-matching in $\hat{G}$. 
Then return $F:= F'$. 

\medskip

The pseudocode of the maximum-weight \ufeas{} $t$-matching algorithm is presented in Algorithm \ref{ALGweighted}. 
For complexity, 
it follows that one \textsc{DualUpdate}($G,F,p,q,r$) requires $\order{m}$ time, 
and it is executed $\order{n^3}$ times. 
This is the difference from the unweighted version; 
thus, 
the total complexity is $\order{t(n^3 (m+\alpha) + n^2 \beta)}$.

\begin{algorithm}[t]
\caption{Maximum-weight \ufeas{} $t$-matching}
\label{ALGweighted}
\label{alg:general}
\begin{algorithmic}[1]
\State Set $F,p,q,r$ by \eqref{EQinitial}
\While{Condition \eqref{EQcsp} is violated}
	\State $D \leftarrow \mbox{\textsc{AuxiliaryDigraph}}(G,x,p,q,r)$ 
	\If{$D$ has an $S$-$T$ path $P$}
		\If{$F \sd P$ is \ufeas}
			\State $F \leftarrow \mbox{\textsc{Augment}}(G,F,P)$ 
			\State $(G,F) \leftarrow \mbox{\textsc{Expand}}(G,F,r)$
		\Else
			\State $(F,U) \leftarrow \mbox{\textsc{ViolatingSet}}(G,F,P)$
			\If{$p(u)=0$ for some $u \in U$}
				\State $F \leftarrow \mbox{\textsc{Modify}}(G,F,U)$
				\State $(G,F) \leftarrow \mbox{\textsc{Expand}}(G,F,r)$
			\Else 
				\State $(G,F) \leftarrow \mbox{\textsc{Shrink}}(U)$
			\EndIf 
		\EndIf
	\Else 
		\State $(p,q,r) \leftarrow \mbox{\textsc{DualUpdate}}(G,F,p,q,r)$ 
		\State $(G,F) \leftarrow \mbox{\textsc{Expand}}(G,F,r)$
	\EndIf
\EndWhile
\State $(G,F) \leftarrow \mbox{\textsc{Expand}}(G,F)$ \\
\Return $(F,p,q,r)$
\end{algorithmic}
\end{algorithm}

It is clear that the optimal dual solution $(p,q,r)$ found by Algorithm \ref{ALGweighted} is integer if 
the edge weight $w$ is integer. 
Thus, 
Algorithm \ref{ALGweighted} constructively proves the following theorem for the integrality of (P) and (D). 
This is a common extension of dual integrality theorems for 
nonbipartite matchings \cite{CM78}, 
even factors \cite{KM04}, 
triangle-free $2$-matchings \cite{CP80}, 
square-free $2$-matchings \cite{Mak07}, 
and 
branchings \cite{Edm67}. 

\begin{theorem}
If 
$(G,\U,t)$ admits expansion and 
$w$ is vertex-induced 
on each $U \in \U$, 
then 
the linear program \textsc{(P)} has an integer optimal solution. 
Moreover, 
the linear program
\textsc{(D)} also an integer optimal solution such that 
the number of sets $U \in \U$ with $r(U)>0$ is at most $n/2$. 
\hfill \qedsymbol
\end{theorem}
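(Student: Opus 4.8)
The plan is to argue that Algorithm \ref{ALGweighted} is a correct primal--dual method whose output is a pair of integer optimal solutions, and that the bound on the support of $r$ follows from the structure of the nested shrunk sets. First I would establish termination and correctness: throughout the execution we maintain a primal feasible $F$ (a \ufeas{} $t$-matching after expansion) and a dual feasible $(p,q,r)$ satisfying the complementary slackness conditions \eqref{EQcsx}, \eqref{EQcsq}, \eqref{EQcsr}, and \eqref{EQcsp} restricted to $v \in \m{V}$. The termination criterion is that \eqref{EQcsp} holds for every $v \in \p{V}$ as well; at that point all four complementary slackness conditions hold simultaneously, so by LP duality $x$ (the indicator of $F$) and $(p,q,r)$ are optimal for (P) and (D) respectively. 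The key invariant to verify is that each of \textsc{Augment}, \textsc{Modify}, \textsc{Shrink}, \textsc{DualUpdate}, and \textsc{Expand} preserves dual feasibility and the maintained slackness conditions; in particular, \textsc{DualUpdate} is designed so that the step size $\epsilon=\min\{\epsilon_1,\epsilon_2,\epsilon_3\}$ keeps $w'(e)\ge 0$, $p(v)\ge 0$, and $r(U)\ge 0$, while making progress toward satisfying \eqref{EQcsp} on $\p{V}$.

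Next I would address integrality. Since $w$ is integer and the initial $p$ in \eqref{EQinitial} is integer (each $p(v)$ is either $0$ or a maximum of integer edge weights), it suffices to show that every \textsc{DualUpdate} keeps $p$, $q$, and $r$ integer. This reduces to showing that the step size $\epsilon$ is always a (nonnegative) integer. The quantities $\epsilon_2=\min\{p(u)\}$ and $\epsilon_3=\min\{r(U)\}$ are integers by the inductive hypothesis, and $\epsilon_1=\min\{w'(\{u,v\})\}$ is an integer provided $w'$ stays integer, which again follows inductively from the integrality of $w$, $p$, $q$, and $r$. Because the dual update shifts each of $p$, $q$, $r$ by exactly $\epsilon$ (or leaves it fixed), integrality is preserved, and the vertex-induced assumption guarantees that the edge sets $F_U$ selected in \textsc{Expand} and $K$ in \textsc{Modify} can be chosen without disturbing optimality.

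Finally I would bound the number of sets $U\in\U$ with $r(U)>0$ by $n/2$. The mechanism that creates a positive $r(U)$ is \textsc{Shrink}($U$), and \textsc{Expand}($G,F,r$) restores $r(U)=0$; hence at any moment the sets with $r(U)>0$ are exactly the currently shrunk sets. The crucial structural fact is that the shrunk sets form a laminar (nested) family, since \textsc{Shrink} is applied to a set $U\in\U\subseteq 2^{\h V}$ in the current graph $G$, whose pseudovertices already encode previously shrunk proper subsets. Each \textsc{Shrink}($U$) replaces at least two vertices of $G$ (namely the vertices of $\p U$ and of $\m U$, at least one on each side) by the single pair $\p{u_U},\m{v_U}$, so it strictly decreases the number of vertices by at least one; over the whole execution the number of distinct shrunk sets that can coexist is bounded by the number of internal nodes of a laminar family on $\h V$, which is at most $n/2$ once one counts that each shrunk set consumes at least two original vertices on its own side. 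I would make this precise by a counting argument on the laminar forest of shrunk sets.

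The hard part will be verifying that the dual step size $\epsilon$ is always strictly positive and integral and that \textsc{Modify} together with the vertex-induced weight assumption preserves all the maintained complementary slackness conditions simultaneously; in particular, reconciling \textsc{Modify} (which rearranges $F[U]$ when $p(u^*)=0$) with the requirement \eqref{EQcsr} on shrunk sets is the step most likely to require care, since it must not create a violation of $\U$-feasibility nor disturb the slackness for sets nested inside $U$.
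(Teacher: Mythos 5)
Your overall strategy coincides with the paper's: the paper gives no separate proof, asserting only that Algorithm \ref{ALGweighted} ``constructively proves'' the theorem, which is exactly the primal--dual verification you outline (maintain primal and dual feasibility together with \eqref{EQcsx}, \eqref{EQcsq}, \eqref{EQcsr} and \eqref{EQcsp} on $\m{V}$; terminate when \eqref{EQcsp} holds on $\p{V}$; integrality of every step size $\epsilon$ propagates integrality of $(p,q,r)$; the support of $r$ lives on the shrunk sets). Your first two paragraphs are faithful to that plan, and you correctly flag \textsc{Modify} and the role of the vertex-induced assumption as the delicate invariant-preservation steps.

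The genuine gap is in your third paragraph, the bound $|\{U \in \U : r(U)>0\}| \le n/2$. Three of your claims there are wrong or insufficient. First, \textsc{Shrink}$(U)$ need not decrease the vertex count: if the representation of $U$ in the current graph has exactly one vertex on each side, two vertices are replaced by the two new pseudovertices, a net change of zero. Second, even granting a decrease of at least one per shrink, that only bounds the number of coexisting shrunk sets by roughly $n-2$, not $n/2$; you need a decrease of at least \emph{two} per coexisting shrunk set. Third, the combinatorial claim that a laminar family of sets of size at least two (or its ``internal nodes'') has at most $n/2$ members is false: the pattern $\{1,2\},\{3,4\},\{1,2,3,4\}$, iterated, yields laminar families of balanced even-size sets with $n-1$ members; the uncharged configurations are precisely the merge-type nodes, i.e., a shrunk set equal to the disjoint union of previously shrunk sets with no private vertices, which your counting never pays for. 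The facts you actually need come from the algorithm, not from laminarity alone: every set that is ever shrunk satisfies \eqref{EQviolate}, hence carries a $t$-factor of $\hat{G}[U]$ in some expansion, hence is balanced ($|\p{U}|=|\m{U}|$); consequently its current-graph representation $W$ at shrink time has even size, and each currently shrunk set with $c$ pseudovertex-pair children and $2p$ private natural vertices accounts for a net vertex decrease of $2(c+p-1)$. If one rules out (or separately handles) the degenerate case $c+p=1$, i.e., $|W|=2$, then every coexisting shrunk set charges at least $2$ to the vertex count, giving at most $(n-2)/2$ sets with $r(U)>0$; note that this charging scheme does handle merge nodes, since they have $c \ge 2$. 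Without excluding $|W|=2$ shrinks and without replacing your laminar-forest node count by this vertex-count argument, the stated $n/2$ bound does not follow from what you wrote.
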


\section{Conclusion}
\label{SECconcl}

We have presented a new framework for the optimal \ufeas{} $t$-matching problem 
and established a min-max theorem and combinatorial algorithm under the reasonable assumption that 
$G$ is bipartite, 
$(G,\U,t)$ admits expansion, 
and 
$w$ is vertex-induced on each $U \in \U$. 
Under this assumption, 
our problem can describe a number of generalization of the matching problem, 
such as the matching and triangle-free $2$-matching problems in nonbipartite graphs, 
the square-free $2$-matching problem in bipartite graphs, 
and matroids and arborescences. 
We have also obtained a new class of the restricted $2$-matching problem, the $C_{4k+2}$-free $2$-matching problem, 
which can be solved efficiently under a corresponding assumption. 

It is noteworthy that the $\U$-feasibility is a common generalization of 
the blossom constraints for the nonbipartite matching problem 
and 
the subtour elimination constraints for the TSP\@. 
We expect that this unified perspective will provide a 
new approach to the TSP utilizing matching and matroid theories. 

\section*{Acknowledgements}

I thank Yutaro Yamaguchi for the helpful comments regarding the draft of the paper. 
I am also thankful to the anonymous referees for their careful reading and comments. 
This work has been supported by 
JSPS KAKENHI Grant Numbers 16K16012, 
25280004, 
and 26280001, 
and 
CREST, JST, Grant Number JPMJCR1402, Japan.

\bibliographystyle{myjorsj2}
\bibliography{../../../../../refs}

\end{document}